\newtheorem{theo}{Theorem}[section]
\newtheorem{prop}{Proposition}[section]
\newtheorem{lemma}{Lemma}[section]
\newtheorem{cor}{Corollary}[section]
\begin{document}

\title{Center foliation: absolute continuity, disintegration and rigidity}
          %For each author, make a block with the following four macros:
\author{R\'egis Var\~ao
\thanks {regisvarao@icmc.usp.br}}
          %{Put the URL for your home page here if you have one}

          %Use \thanks statements for acknowledgements of grants and
          %support. They will appear below all the authors' addresses, so be
          %specific about which author is thanking whom:

          %\thanks{}

\maketitle

 \begin{abstract}
In this paper we address the issues of absolute continuity for the center foliation (as well as the disintegration on the non-absolute continuous case) and rigidity  of volume preserving partially hyperbolic diffeomorphisms isotopic to a linear Anosov on $\mathbb T^3$. It is shown that the disintegration of volume on center leaves may be neither atomic nor Lebesgue, in contrast to the dichotomy (Lebesgue or atomic) obtained by Avila, Viana, Wilkinson \cite{AVW}. It is also obtained results concerning the atomic disintegration. Moreover, the absolute continuity of the center foliation does not imply smooth conjugacy with its linearization. Imposing stronger conditions besides absolute continuity on the center foliation, smooth conjugacy is obtained.
\end{abstract}

\section{Introduction and statements}
We study the measure-theoretical properties of the center foliation of partially hyperbolic diffeomorphisms for which the center leaves are non-compact. Two main issues are:

\begin{itemize}
 \item absolute continuity: when is the center foliation absolutely continuous? What can be said otherwise (disintegration)?
\item rigidity: does absolute continuity imply greater regularity?
\end{itemize}

These issues are fairly well understood for certain volume preserving partially hyperbolic diffeomorphism (perturbations of certain skew-products or of time-one maps of Anosov flows) studied by Avila, Viana and Wilkinson \cite{AVW}. We state their dichotomy:

\begin{itemize}
\item \textit{Atomic disintegration}: If the center foliation is non-absolutely continuous, then there exists $k \in \mathbb N$ and a full volume subset that intersects each center leaf on exactly $k$ points/orbits.% We then say that volume has atomic disintegration along center leaves.
 \item \textit{Rigidity}: If the center foliation is absolutely continuous then the diffeomorphism is smoothly conjugate to a rigid model (a rotation extension of an Anosov diffeomorphism or the time-one map of an Anosov flow);
\end{itemize}

On three-dimensional manifolds, the only known examples of partially hyperbolic diffeomorphism are of skew-product type, perturbation of time-one of Anosov flows and diffeomorphisms derives from a linear Anosov. In fact it is conjecture by E. Pujals that these are all the possibilities (see \cite{bonatti.wilkinson} for precise statements). Avila, Viana and Wilkinson \cite{AVW} have treated diffeomorphisms on the first two cases and we treat in this work the third case.

We deal with derived from Anosov (DA) diffeomorphisms, that is, $f$ is a DA diffeomorphism if it partially hyperbolic and lies in the isotopy class of some hyperbolic linear automorphism $A$. We refer to $A$ as the the linearization of $f$. Every partially hyperbolic diffeomorphism has the following splitting on the tangent space $TM=E^s\oplus E^c\oplus E^u$ (see \S \ref{sec.preliminaries} for definitions), where $E^s$ is a contracting direction, $E^u$ is an expanding direction and the center direction $E^c$ has an intermediate behavior. We consider Anosov diffeomorphisms (see \S \ref{sec.preliminaries} for definition) which are partially hyperbolic. That means that they have the splitting $TM=E^s\oplus E^c\oplus E^u$ and the center direction is uniformly contracting or expanding. It then makes sense to talk about the center foliation of an Anosov (partially hyperbolic) diffeomorphism, as we do on Theorem \ref{theo.not.leb.not.atomic} and Theorem \ref{theo.abs.cont.not.c1}. A center foliation (see \S \ref{sec.preliminaries}) is an invariant foliation by $f$ tangent to the $E^c$ direction. We also mention that all diffeomorphisms treated on this work are assumed to be at least $C^{1+\alpha}$. This implies, in particular, that volume preserving Anosov on $\mathbb T^3$ are ergodic.

We state now our results. For non-absolutely continuous center foliation,  we show that it is possible to have a disintegration (\S \ref{sec.decomposition} for definition) which is non-Lebesgue and non-atomic. Our conclusion is different from \cite{AVW} and indeed this is the first example of this kind:

\begin{theo}\label{theo.not.leb.not.atomic} For partially hyperbolic Anosov diffeomorphisms on $\mathbb T^3$, volume preserving, for which the center foliation is non-absolutely continuous, we have that

\begin{itemize}
 \item[i)] there exists $f$ Anosov for which the disintegration of volume on the center leaves are neither Lebesgue, nor atomic.

In fact, such diffeomorphisms fill a dense subset of an infinite-dimensional manifold in the neighborhood of any hyperbolic linear automorphisms in the space of volume preserving maps;

 \item[ii)] the conditional measures are singular measures with respect to the volume on the center leaf;

\item[iii)] if the decomposition is atomic, then there is exactly one atom per leaf. That is, there exists a set of full volume that intersects each center leaf in one point;

\item[iv)] the disintegration of volume on the center leaves is atomic if and only if the partition by center leaves is a measurable partition.
\end{itemize}
\end{theo}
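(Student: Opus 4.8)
\emph{Conventions and common mechanisms.} After possibly replacing $f$ by $f^{-1}$, I assume $E^c$ is uniformly contracting, so that center leaves are injectively immersed copies of $\mathbb R$ and $\|Df|_{E^c}\|\le\lambda<1$; hence for $x,y$ on a common center leaf $\gamma$ one has $d_{f\gamma}(fx,fy)\le\lambda\,d_\gamma(x,y)$. I use that $f$ is ergodic for $m$, that the center partition is $f$-invariant, so that the disintegrations of $m$ in small foliation boxes are equivariant, and that $f$ carries arc length on a leaf to a measure equivalent to arc length on its image. Three mechanisms will be used repeatedly: equivariance, ergodicity, and the contraction $\lambda<1$; in particular I will use the elementary fact that if $\psi\ge 0$ on an ergodic factor is finite everywhere and satisfies $\psi\circ(\text{induced map})\le\lambda\psi$, then $\psi\equiv 0$ a.e.\ (each sublevel set $\{\psi\le M\}$ is invariant mod $0$, hence of measure $0$ or $1$, forcing $\psi$ constant, hence $0$).

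\emph{Parts (ii) and (iii).} For (ii) I would write each conditional $m^c_\gamma=m^{ac}_\gamma+m^{s}_\gamma$ for its Lebesgue decomposition with respect to arc length; since $f$ is a diffeomorphism along leaves, uniqueness of this decomposition makes the two parts separately equivariant, so the conditional masses of $m^{ac}_\gamma$ and $m^{s}_\gamma$ define $f$-invariant functions on $M$, hence are a.e.\ constant, and the resulting measures on $M$ are $f$-invariant and absolutely continuous with respect to the ergodic $m$, hence proportional to $m$; comparing with the fact that $m^{ac}_\gamma$ is absolutely continuous and $m^{s}_\gamma$ is singular forces $m^c_\gamma$ to be, a.e., purely Lebesgue or purely singular. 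The Lebesgue alternative yields absolute continuity of $\mathcal F^c$ (a small extra argument, again by contraction, rules out vanishing of the densities on a set of positive arc length), contrary to hypothesis; so the conditionals are singular. For (iii), assume atomicity. By an argument as in \cite{AVW}, equivariance and ergodicity make the weight profile of the conditionals a.e.\ constant, so the heaviest atoms together with a heaviest among the remaining ones form, on a.e.\ center leaf, an $f$-equivariant finite set $S_\gamma$ of $N$ points, with $N\ge 2$ unless there is a single atom of weight one. Letting $G(\gamma)$ be the least gap between consecutive points of $S_\gamma$ along $\gamma$, contraction gives $G\circ f\le\lambda\,G$ with $0<G<\infty$ a.e., so by the mechanism above $G\equiv 0$ --- absurd for $N\ge 2$ distinct points. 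Hence each plaque carries one atom, of weight one; and if two such atoms $p\ne q$ lay on a common leaf, then $d(f^np,f^nq)\to 0$, so for large $n$ they would be distinct atoms on a common plaque, again absurd. Therefore the full-volume set $Z$ of atoms is $f$-invariant and meets each center leaf exactly once.

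\emph{Part (iv).} If the disintegration is atomic, part (iii) furnishes the full-volume Borel $f$-invariant set $Z$ meeting each leaf once; since $\mathcal F^c$ generates a Borel equivalence relation on $M$, the Lusin--Novikov uniformization (the sections being singletons) produces a measurable, leaf-constant map whose fibres form a measurable partition coinciding with $\mathcal F^c$ mod $0$. Conversely, if the center partition is measurable, I take the Rokhlin conditionals $m_\gamma$ (probabilities on entire leaves, equivariant under $f$) and set $\rho(\gamma):=\inf_{x\in\gamma}\inf\{r:\ m_\gamma(B^c_r(x))\ge 3/4\}$, which is finite everywhere and satisfies $\rho\circ f\le\lambda\,\rho$ on the now-genuine leaf space; by the mechanism above $\rho\equiv 0$, i.e.\ a.e.\ $m_\gamma$ has an atom of mass $\ge 3/4$. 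Removing that atom and renormalizing keeps equivariance, so iterating exhibits $m_\gamma$ as purely atomic with geometrically decaying residual mass, and atomicity of the box disintegrations follows.

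\emph{Part (i) and the main obstacle.} For the examples I would perturb a hyperbolic linear automorphism $A$ inside the space of volume preserving DA diffeomorphisms: using the criterion for absolute continuity of the center foliation (relating it to the center Lyapunov exponent of $m$ and to the conjugacy $h$ with $A$), a $C^\infty$-dense, infinite-dimensional family of perturbations makes $\mathcal F^c$ non-absolutely continuous; for these, (ii)--(iv) show the disintegration is singular and is non-atomic exactly when the center partition is non-measurable. One then rules out measurability for a dense subfamily --- if it were measurable the conditionals would be atomic, and transporting this rigidity through $h$ would pin the center exponent back to its linear value, i.e.\ make $\mathcal F^c$ absolutely continuous, a contradiction --- the infinite dimensionality being inherited from the perturbation family. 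I expect the genuinely hard point to be precisely this construction: exhibiting perturbations for which the center partition is \emph{provably} non-measurable requires a concrete model in which the non-linearity of $f$ along $E^c$ and the distortion of $h$ cooperate, rather than the soft dichotomy arguments of (ii)--(iv); a secondary difficulty is closing the Lebesgue alternative in (ii), i.e.\ showing absolutely continuous conditionals really force absolute continuity of $\mathcal F^c$.
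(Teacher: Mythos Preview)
Your arguments for (iii) and (iv) are correct and compress the paper's chain of Markov-rectangle lemmas into the single mechanism $\psi\circ f\le\lambda\psi\Rightarrow\psi\equiv0$, applied first to the minimum gap between atoms and then to the concentration radius; the paper does the same thing in substance but through several separate lemmas on a fixed Markov partition, followed by a transversal-disk construction for the measurability direction. For (ii) the paper takes a genuinely different route: if some center leaf meets the Birkhoff-generic set $B$ in positive arc length, a Pesin--Sinai averaging of push-forwards of arc length on a center plaque shows that volume itself has absolutely continuous center conditionals, so $\mathcal F^c$ is absolutely continuous; contrapositively every leaf meets $B$ in arc-length zero while the conditionals give $B$ full mass, hence singularity. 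Your Lebesgue-decomposition approach works in spirit, but note that ``the ac mass of the conditional'' is box-dependent and $f$ does not carry plaques to plaques, so the invariance claim as stated is not literal; the clean formulation is that the leafwise ac support is an $f$-invariant Borel subset of $M$, hence null or conull by ergodicity.

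The genuine gap is (i). Your proposed contradiction --- atomic $\Rightarrow$ measurable by (iv), and then ``transporting rigidity through $h$ would pin the center exponent back to its linear value'' --- has no force: nothing you have proved connects atomicity to the value of the center exponent, and without knowing that the conjugacy $h$ preserves volume you cannot transport anything through it. The paper supplies exactly this missing ingredient. One perturbs $A$ (splitting $E^{ss}\oplus E^{ws}\oplus E^u$) by a volume-preserving $\phi$ that \emph{preserves the $E^u$ direction}; then $\lambda^u_f\equiv\lambda^u_A$ on periodic orbits, so the potentials $0$ and $-\log\|Df|_{E^u}\|$ are cohomologous, the measure of maximal entropy equals volume, and hence the topological conjugacy $h$ with $A$ is volume-preserving. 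Since $h$ also carries $\mathcal F^c_A$ to $\mathcal F^c_f$, the center disintegration of volume for $f$ is the $h$-push-forward of the Lebesgue center disintegration for $A$, hence automatically non-atomic. Non-absolute-continuity is obtained separately: were $\mathcal F^c_f$ absolutely continuous, Gogolev's criterion together with the construction would make all three exponents constant on periodic orbits, and Lemma~\ref{lemma.conjugacy} would force $C^1$ conjugacy to $A$, contradicting the Baraviera--Bonatti change of the weak exponent under $\phi$. The infinite-dimensional dense family is simply the freedom in choosing $\phi$.
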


For item iv) above, we don't need to suppose that we are on the non-absolute continuous case. The next result shows, in contrast to the dichotomy \cite{AVW}, that absolute continuity has no rigidity implications in our case:

\begin{theo}\label{theo.abs.cont.not.c1}
 There exist volume preserving Anosov diffeomorphisms $f$ on $\mathbb T^3$ for which the center foliation is absolutely continuous but $f$ is not $C^1$-conjugate to its linearization.

In fact, such diffeomorphisms fill a dense subset of an infinite-dimensional manifold in the neighborhood of any hyperbolic linear automorphism in the space of volume preserving maps.
\end{theo}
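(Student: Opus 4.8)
The plan is to build the examples explicitly, as localized volume-preserving perturbations of $A$ that keep a \emph{constant} (hence smooth) line field in the center direction; the center foliation then stays smooth, so absolutely continuous, while the periodic data is free to drift away from that of $A$. Write $A=\operatorname{diag}(\mu_1,\mu_2,\mu_3)$ in its eigenbasis $v_1,v_2,v_3$ on $\mathbb{R}^3$, with $v_2$ spanning the one-dimensional center direction (so $\mu_1<\mu_2<1<\mu_3$ or $\mu_1<1<\mu_2<\mu_3$; throughout I take $A$ to have three real simple eigenvalues, which is precisely when $A$ is partially hyperbolic with one-dimensional center). In a small ball $B$ about the fixed point $0$, with linear coordinates $(s,c,u)$ along $v_1,v_2,v_3$, set $f\equiv A$ off $B$ and, on $B$, $f(s,c,u)=\bigl(p(s,u),\,\varphi(s,u)\,c,\,q(s,u)\bigr)$, where $p,q$ are smooth, $C^\infty$-close to $s\mapsto\mu_1 s$ and $u\mapsto\mu_3 u$, equal to them near $\partial B$, and equal to $\mu_1 s$ and $\mu_3' u$ near $0$ for a fixed $\mu_3'\neq\mu_3$ close to $\mu_3$; and $\varphi(s,u)$ is the reciprocal of the Jacobian determinant of $(s,u)\mapsto(p(s,u),q(s,u))$, so that $\det Df\equiv1$ and the formula glues smoothly to $A$ (near $\partial B$ one has $\varphi\equiv\mu_2$). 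This is a routine bump construction; the only point to notice is that the $(s,u)$-components of $f$ do not depend on $c$.

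Two structural facts follow. First, since the $(s,u)$-components of $f$ are $c$-independent, $f$ carries each local plaque $\{s=\mathrm{const},\,u=\mathrm{const}\}$ of the linear foliation $\mathcal{W}^c_A$ into another such plaque; gluing with $f=A$ off $B$, $f$ leaves the smooth linear foliation $\mathcal{W}^c_A$ invariant. Second, $Df_x(v_2)=\varphi(s,u)\,v_2\in\mathbb{R}v_2$ for every $x$, so the constant line field $\mathbb{R}v_2$ is \emph{exactly} $Df$-invariant. Since $f$ is a small $C^\infty$ volume-preserving perturbation of the Anosov map $A$, it is itself Anosov and partially hyperbolic; its strong stable and strong unstable bundles $E^{ss}_f,E^{uu}_f$ are one-dimensional and $C^0$-close to $\mathbb{R}v_1,\mathbb{R}v_3$, so $E^{ss}_f\oplus\mathbb{R}v_2\oplus E^{uu}_f$ is a $Df$-invariant splitting, and it is dominated since its three rates lie near $\mu_1<\mu_2<\mu_3$ and inherit the spectral gaps of $A$. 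By uniqueness of the dominated splitting this \emph{is} the partially hyperbolic splitting of $f$, hence $E^c_f=\mathbb{R}v_2$; being a $C^\infty$ line field, $\mathbb{R}v_2$ is uniquely integrable and its integral foliation is exactly $\mathcal{W}^c_A$, which is therefore the center foliation of $f$. Being smooth, it is absolutely continuous.

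Finally, $f$ is not $C^1$-conjugate to $A$: by construction $Df_0=\operatorname{diag}\bigl(\mu_1,(\mu_1\mu_3')^{-1},\mu_3'\bigr)$ has strong-unstable eigenvalue $\mu_3'\neq\mu_3$, whereas if $h\circ f=A\circ h$ with $h$ a $C^1$ diffeomorphism then, $h$ being $C^0$-close to the identity by structural stability, $h$ fixes $0$, and differentiating at $0$ shows $Df_0$ linearly conjugate to $A$, forcing the eigenvalues of $Df_0$ to equal $\mu_1,\mu_2,\mu_3$ --- impossible once the perturbation is small enough that $\mu_3'\notin\{\mu_1,\mu_2,\mu_3\}$. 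For the ``in fact'' clause, let $\mathcal{A}$ be the set of volume-preserving $C^\infty$ diffeomorphisms of $\mathbb{T}^3$ whose derivative preserves the constant line field $\mathbb{R}v_2$ (equivalently, those leaving $\mathcal{W}^c_A$ invariant with center bundle $\mathbb{R}v_2$). This is a closed, infinite-dimensional submanifold of $\operatorname{Diff}^\infty_{\mathrm{vol}}(\mathbb{T}^3)$ containing $A$ --- infinite-dimensional because the perturbations built above already sweep out an infinite-dimensional family inside it. Near $A$, every $f\in\mathcal{A}$ is a partially hyperbolic Anosov diffeomorphism whose center foliation is the smooth --- hence absolutely continuous --- foliation $\mathcal{W}^c_A$, and those $f\in\mathcal{A}$ that are $C^1$-conjugate to $A$ all lie in the closed, positive-codimension subset of $\mathcal{A}$ on which $Df$ at the fixed point near $0$ has strong-unstable eigenvalue $\mu_3$. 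Hence the diffeomorphisms of the theorem form an open dense subset of a neighborhood of $A$ in $\mathcal{A}$.

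The step I expect to require the most care is not the construction but the identification $E^c_f=\mathbb{R}v_2$ --- that the smooth invariant foliation $\mathcal{W}^c_A$ really is \emph{the} center foliation of $f$, not merely an incidental smooth invariant foliation. This rests on the persistence and uniqueness of dominated splittings under $C^1$-small perturbations, together with the exact (not just approximate) $Df$-invariance of the constant line field $\mathbb{R}v_2$; once that is secured, absolute continuity is automatic and the failure of rigidity is a one-line eigenvalue count. The only other delicate point is pure bookkeeping: arranging exact volume preservation, exact invariance of $\mathbb{R}v_2$, and a genuine change of the periodic data at $0$ simultaneously --- which the division of labor between $p,q$ (acting transversally) and $\varphi$ (acting leafwise, adjusted only to restore the Jacobian) makes straightforward.
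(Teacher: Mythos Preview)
Your proof is correct and takes a genuinely different route from the paper's. The paper perturbs $A$ by a volume-preserving diffeomorphism that preserves the \emph{strong stable} direction and exponent, then invokes Gogolev's criterion (Theorem~\ref{theo:gogolev}) to obtain absolute continuity of the center foliation, and finally appeals to Lemma~\ref{lemma.conjugacy} --- the paper's main technical lemma --- to rule out $C^1$-conjugacy once the exponents have been moved. You instead arrange the perturbation so that the \emph{center} line field stays exactly equal to the constant field $\mathbb{R}v_2$; then the center foliation of $f$ is literally the linear foliation $\mathcal{W}^c_A$, hence $C^\infty$ and trivially absolutely continuous, and non-conjugacy follows from a one-line eigenvalue comparison at the fixed point. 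Your approach is more self-contained --- it needs neither Gogolev's criterion nor Lemma~\ref{lemma.conjugacy} --- and in fact yields the stronger conclusion that the center foliation is smooth, not merely absolutely continuous; the paper's argument, by contrast, has the virtue of exhibiting Theorem~\ref{theo.abs.cont.not.c1} as an immediate corollary of Lemma~\ref{lemma.conjugacy}, which is what the paper wants to showcase. The step you rightly flag as delicate, the identification $E^c_f=\mathbb{R}v_2$, is indeed the crux, and your argument via uniqueness of the dominated splitting is sound: a continuous $Df$-invariant line field whose expansion rate lies uniformly strictly between those of $E^{ss}_f$ and $E^{uu}_f$ must coincide with $E^c_f$.
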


As we shall see, Theorem \ref{theo.abs.cont.not.c1} will be just a corollary of the following result, which is important on its own:

\begin{lemma}\label{lemma.conjugacy}
  Let $f$ be a volume preserving partially hyperbolic Anosov diffeomorphism on $\mathbb T^3$. Then, for any periodic points $p, q$ the Lyapunov exponents on each of the directions (stable, center, unstable) are the same if and only if $f$ is $C^1$ conjugate to its linearization.
\end{lemma}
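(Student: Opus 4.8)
\emph{Proof sketch.}

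The implication ``$C^1$-conjugate to the linearization $\Rightarrow$ constant periodic data'' is immediate. If $h\circ f=A\circ h$ with $h$ a $C^1$ diffeomorphism, then for a periodic point $p$ of period $n$ one has $Df^n(p)=Dh(p)^{-1}A^nDh(p)$, so the eigenvalue moduli of $Df^n(p)$ are exactly $|\lambda_s|^n,|\lambda_c|^n,|\lambda_u|^n$, where $\lambda_s,\lambda_c,\lambda_u$ are the eigenvalues of $A$, independently of $p$; since the bundles $E^s,E^c,E^u$ are dynamically characterized, $Dh$ maps each of them to the corresponding $A$-invariant bundle, so the Lyapunov exponent of $f$ at $p$ along the $\sigma$-direction equals $\log|\lambda_\sigma|$ for every periodic $p$ and every $\sigma\in\{s,c,u\}$.

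For the converse, assume the periodic exponents are constant, and write $\chi_s<0<\chi_c<\chi_u$ for the three values (taking $E^c$ uniformly expanding; the contracting case follows by replacing $f$ with $f^{-1}$). By the Franks--Manning theorem there is a homeomorphism $h$, homotopic to the identity and at bounded distance from it on the universal cover, with $h\circ f=A\circ h$; it carries the stable, strong unstable, center and center--unstable foliations of $f$ to the corresponding linear foliations of $A$. It suffices to prove $h$ is a $C^1$ diffeomorphism. Because $f$ is $C^{1+\alpha}$ and each bundle $E^\sigma$ is one-dimensional, $x\mapsto\log\|Df|_{E^\sigma_x}\|$ is a H\"older cocycle over $f$ whose Birkhoff sums over periodic orbits are constant, so the Liv\v sic theorem provides for each $\sigma$ a positive H\"older $u_\sigma$ with $\|Df|_{E^\sigma_x}\|=e^{\chi_\sigma}u_\sigma(fx)/u_\sigma(x)$; by the regularity theory for Liv\v sic cohomology (de la Llave--Marco--Moriy\'on) each $u_\sigma$ is smooth along the leaves of $W^s_f$ and $W^u_f$, in particular $C^{1+\alpha}$ along $W^\sigma_f$-leaves.

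Using these trivializations I would put on each leaf of $W^s_f$, $W^c_f$ and $W^{uu}_f$ a $C^{1+\alpha}$ coordinate, depending continuously on the transverse parameter, in which $f$ acts as the exact linear map $t\mapsto e^{\chi_\sigma}t$ (this is the standard $C^1$ linearization of a one-dimensional $C^{1+\alpha}$ contraction/expansion, carried out leafwise with the help of $u_\sigma$). One then assembles these: inside a center--unstable leaf the sub-foliations $W^c_f$ and $W^{uu}_f$ give a topological product structure, exactly as for $A$, and transporting the leaf coordinates yields a chart in which $f$ acts as $(s,t)\mapsto(e^{\chi_c}s,e^{\chi_u}t)$ provided the holonomies of $W^c_f$ along $W^{uu}_f$, and conversely, are $C^1$; a further patching of $W^s_f$ against the center--unstable foliation then produces a global $C^1$ chart $H$ on $\mathbb T^3$ conjugating $f$ to an affine automorphism $g$ whose linear part has eigenvalue moduli $e^{\chi_s},e^{\chi_c},e^{\chi_u}$. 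Since $g$ has integer linear part $B$ and is conjugate to $f$ by the homeomorphism $H$, one gets $B=H_\ast AH_\ast^{-1}$ in $GL(3,\mathbb Z)$; as $g$ is Anosov, $I-B$ is invertible, so $g$ is conjugate by a translation to $x\mapsto Bx$, which in turn is conjugate to $A$ via the linear automorphism induced by $H_\ast$. Chaining these smooth conjugacies gives $f\sim_{C^1}A$, and in particular $e^{\chi_\sigma}=|\lambda_\sigma|$ a posteriori.

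The crux, and the step I expect to be hardest, is the patching: showing that the holonomies of the one-dimensional invariant foliations (between one another inside $W^{cu}_f$, and of $W^s_f$ along $W^{cu}_f$) are $C^1$, so that the leafwise linearizations glue to a genuine $C^1$ chart. This is precisely where constancy of the periodic data is indispensable: a priori the center foliation can be extremely wild --- the whole point of this paper --- but constant periodic data forces the three derivative cocycles to be cohomologically trivial, which makes each sub-foliation ``infinitesimally linear'' and rigidifies its holonomy through a functional equation intertwining two linear maps of different rates, just as in the linear model. Controlling the fact that the $u_\sigma$ are only globally H\"older (smooth solely along $W^s$ and $W^u$) and invoking a Journ\'e-type lemma to pass from $C^1$-along-foliations to globally $C^1$ is the remaining technical work.
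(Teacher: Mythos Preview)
Your outline is sound, but it takes a quite different route from the paper and leaves the decisive technical step as a black box.

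\medskip
\textbf{What the paper does.} The paper never tries to upgrade the Franks--Manning conjugacy directly. Instead it first proves that the constant periodic exponents of $f$ are \emph{equal to} those of the linearization $A$, and then invokes Gogolev--Guysinsky wholesale. Concretely (assuming the splitting $E^{ss}\oplus E^{ws}\oplus E^u$): (i) since $\lambda^u_f(p)$ is constant on periodic points, the potential $-\log\|Df|_{E^u}\|$ is cohomologous to a constant, so the measure of maximal entropy equals the SRB measure, which here is volume; Pesin's formula and $h_{top}(f)=h_{top}(A)$ then give $\lambda^u_f(p)=\lambda^u_A$. (ii) The quasi-isometry estimate (Proposition~\ref{prop:geometric.property} and its corollaries) yields $\lambda^{ss}_f\ge\lambda^{ss}_A$ and $\lambda^{ws}_f\ge\lambda^{ws}_A$; combined with volume preservation ($\lambda^{ss}_f+\lambda^{ws}_f+\lambda^u_f=0=\lambda^{ss}_A+\lambda^{ws}_A+\lambda^u_A$) this forces equality in all three directions. (iii) With the periodic data of $f$ matching that of $A$, \cite{gogolev.guysinsky-c1.conjugacy} gives $C^1$ conjugacy directly.

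\medskip
\textbf{Comparison.} Your approach attempts, in effect, to reprove the Gogolev--Guysinsky result from scratch via Liv\v sic trivialization, leafwise $C^{1+\alpha}$ linearizations, and a Journ\'e patching. The step you correctly identify as the crux---$C^1$ regularity of the holonomies of $W^c_f$ along $W^{uu}_f$ inside $W^{cu}_f$---is precisely the technical heart of \cite{gogolev.guysinsky-c1.conjugacy}, and it is \emph{not} a routine consequence of cohomological triviality of the derivative cocycles (recall Theorem~\ref{theo.abs.cont.not.c1} of this very paper: the center foliation can fail to be $C^1$ even when it is absolutely continuous). So your sketch stops exactly where the hard analysis begins, and ultimately rests on the same reference the paper cites, only after a longer detour. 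A side remark: your argument never uses the volume-preserving hypothesis, which is consistent with the fact that Gogolev--Guysinsky do not need it either once the periodic data matches; the paper uses volume preservation upstream, to identify the constant values with the linear ones via entropy and the zero-sum of exponents, which is what makes the reduction to \cite{gogolev.guysinsky-c1.conjugacy} so short.
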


Note that Theorem \ref{theo.abs.cont.not.c1} implies that to obtain a rigidity result we must impose some stronger conditions on the center foliation besides absolute continuity. And we do so to obtain the following rigidity result.

\begin{theo}\label{theo.rigidity}
  Let $f$ be a volume preserving DA diffeomorphism on $\mathbb T^3$, with the linearization $A$. If the center foliation is a $C^1$ foliation and the center holonomies inside the center-unstable, $\mathcal F_f^{cu}$, and center-stable, $\mathcal F_f^{cs}$, leaves are uniformly bounded, then $f$ is $C^1$ conjugate to its linearization and, hence, is an Anosov diffeomorphism.
\end{theo}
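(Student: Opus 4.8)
The plan is to reduce the statement to Lemma~\ref{lemma.conjugacy}. That lemma applies to Anosov $f$, so there are two things to establish: (a) that our DA diffeomorphism $f$ is in fact Anosov, and (b) that for any two $f$-periodic points the Lyapunov exponents in each of the stable, center and unstable directions coincide; granting both, the lemma produces a $C^1$ conjugacy with $A$. Throughout I would use the standard structural facts for a volume-preserving DA diffeomorphism on $\mathbb T^3$: it is dynamically coherent, so $\mathcal F^c_f,\mathcal F^{cs}_f,\mathcal F^{cu}_f$ are well-defined invariant foliations, and it is semiconjugate to $A$ by a continuous surjection $h$ homotopic to the identity, $h\circ f=A\circ h$, at bounded distance from the identity, carrying $\mathcal F^s_f,\mathcal F^c_f,\mathcal F^u_f$ onto the corresponding linear foliations of $A$ --- injectively along stable and unstable leaves, monotonically along center leaves. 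Write $\lambda_s,\lambda_c,\lambda_u$ for the eigenvalues of $A$ in the matching order and $\chi^\sigma(p)=\tfrac1n\log|Df^n|_{E^\sigma(p)}|$ for the exponents of a periodic point $p$ of period $n$.

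The core technical claim I would aim to prove is: \emph{the hypotheses that $\mathcal F^c_f$ is $C^1$ and that its holonomies inside the $cu$- and $cs$-leaves are uniformly bounded imply that $h$ is bi-Lipschitz along every center leaf, and likewise along the strong stable and strong unstable directions of every $cs$- and $cu$-leaf.} The idea: $C^1$-ness makes $\mathcal F^c_f$ absolutely continuous, uniform boundedness of its holonomy pseudogroup makes that pseudogroup of uniformly bounded distortion, and together with the automatic absolute continuity (with bounded Jacobians) of the strong holonomies of the $C^{1+\alpha}$ map $f$ this forces the disintegration of volume along center leaves to be equivalent to arclength with uniformly bounded densities; since $h$ intertwines $f$ with the linear map $A$ and sends center leaves to straight center leaves of $A$, one should be able to conclude that $h$ cannot distort center-arclength by more than a uniform factor --- first on a full-volume set of center leaves, then on all of them by continuity of $h$ and of the foliation. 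Once this is known, $h$ collapses no center arc; as $h$ is already injective along stable and unstable leaves, a product-structure argument upgrades this to injectivity of $h$, so $h$ is a homeomorphism and $f$ is conjugate to $A$, hence Anosov --- settling (a).

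For (b), fix an $f$-periodic $p$ of period $n$. On the periodic center leaf through $p$, the map $f^n$ is a $C^1$ interval map fixing $p$, and $h$ conjugates it to $A^n$ restricted to the corresponding periodic center leaf of $A$, i.e.\ to the linear scaling by $\lambda_c^n$ about $h(p)$; since $h$ is bi-Lipschitz along that leaf and a bi-Lipschitz conjugacy between a $C^1$ map fixing a point and a linear scaling forces the multiplier at the fixed point to equal the scaling factor, one gets $\chi^c(p)=\log|\lambda_c|$ for every periodic $p$. Repeating the argument inside the $cs$- and $cu$-leaves (where the center holonomies are again uniformly bounded, the complementary strong holonomy is absolutely continuous, and $h$ is injective along strong leaves) gives $\chi^s(p)=\log|\lambda_s|$ and $\chi^u(p)=\log|\lambda_u|$; alternatively one of these is recovered from the other using $\chi^s(p)+\chi^c(p)+\chi^u(p)=0=\log|\lambda_s\lambda_c\lambda_u|$ together with the two-dimensional version of the argument on a $cs$-leaf. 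With all periodic data of $f$ matching that of $A$, Lemma~\ref{lemma.conjugacy} finishes the proof. I expect the genuine obstacle to be the middle paragraph --- passing from ``$\mathcal F^c_f$ is $C^1$ with uniformly bounded holonomies'' to ``$h$ is bi-Lipschitz along center leaves'' --- since this is exactly where uniform boundedness (rather than mere $C^1$-ness or mere absolute continuity) is indispensable: without it the center conditional densities may degenerate and $h$ may crush center intervals, which is the very mechanism underlying the non-rigid phenomena of Theorems~\ref{theo.not.leb.not.atomic} and~\ref{theo.abs.cont.not.c1}.
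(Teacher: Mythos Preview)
Your overall strategy --- reduce to Lemma~\ref{lemma.conjugacy} by first proving $f$ is Anosov and then that all periodic exponents are constant --- is the same as the paper's. The divergence is entirely in how one extracts the center exponent, and there your ``core technical claim'' is left as a genuine gap which the paper fills by a different mechanism.

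You propose to go from ``$\mathcal F^c$ is $C^1$ with uniformly bounded holonomies'' to ``the conditional densities of volume on center leaves are uniformly bounded'' (this part is fine, and is essentially what the paper also uses) and then to ``$h$ is bi-Lipschitz along center leaves''. This last implication is where the argument breaks: knowing that the conditional measures of \emph{volume} have bounded densities tells you nothing direct about $h$, because $h_*\mathrm{Vol}$ need not be volume for a DA map, so there is no a~priori comparison between $h$-pushforward of center arclength and linear arclength. Some additional dynamical input tying the center length scale of $f$ to the eigenvalue $\lambda_c$ of $A$ is required, and you have not supplied it.

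The paper supplies exactly that input, but not by proving $h$ bi-Lipschitz. Instead it builds, on every center leaf, a measure $m_x$ obtained as a limit of renormalized conditional measures: one fixes transverse surfaces $B$ and $B_k=f^k(B_0)$ and normalizes so that the center segment between them has mass $\lambda^k$; the bounded-holonomy hypothesis (your first step) guarantees these renormalizations stay bounded and subconverge to an $m_x$ with bounded density satisfying the transformation law $f_*m_x=\lambda^{-1}m_{f(x)}$. A Radon--Nikodym computation then yields $\lambda^c(x)=\log\lambda$ at \emph{every} point, not just periodic ones, and the Anosov property follows by a compactness argument. One can, after the fact, read this construction as producing a bi-Lipschitz conjugacy on each center leaf (the distribution function of $m_x$), so your intuition is not wrong; but the $\lambda^k$-normalized limiting procedure is the missing idea that makes it work.

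For the extremal exponents the paper does not repeat any bi-Lipschitz argument inside $cs$/$cu$ leaves as you suggest. It simply observes that a $C^1$ center foliation is in particular absolutely continuous, invokes Theorem~\ref{theo:gogolev} (Gogolev) to get one extremal exponent constant on periodic points, and then volume preservation together with the already-established constancy of $\lambda^c$ forces the remaining one. This is both shorter and avoids having to justify a second bi-Lipschitz claim.
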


\textit{Organization of the paper.} In \S \ref{sec.preliminaries} we give some basic definitions such as what we mean by to disintegrate a measure, absolute continuity, etc. In \S \ref{sec.non-abs.cont} we study the behavior of non-absolute continuous center foliation, where we prove Theorem \ref{theo.not.leb.not.atomic}. We begin \S \ref{sec.conjugacy} understanding how Lyapunov exponents vary with respect to their linearization, we then prove Lemma \ref{lemma.conjugacy} and Theorem \ref{theo.abs.cont.not.c1}. In \S \ref{sec.rigidity} we construct some conditional measures (not probabilities) on each center leaf with some dynamical meaning. We use these measures to prove Theorem \ref{theo.rigidity}.

\section{Preliminaries}\label{sec.preliminaries}

\subsection{Partially Hyperbolic Diffeomorphism.}

A diffeomorphism $f$ of a compact Riemannian manifold $M$ is called partially hyperbolic  if there are constants $\lambda < \hat{\gamma} < 1 < \gamma < \mu$ and $C > 1$ and a $Df$ -invariant splitting of $ T M = E^u (x) \oplus E^c (x) \oplus E^s (x)$ where
\begin{eqnarray*}
 \frac{1}{C}\mu^n||v|| < & ||Df^nv||, &   \quad \quad  \quad  \quad \quad  \; \; v \in E^u_x - \{0\}; \\
 \frac{1}{C}\hat{\gamma}^n||v|| < &||Df^nv||& < C \gamma^n ||v||, \; \;  v \in E^c_x - \{0\}; \\
		&||Df^nv||&< C \lambda^n ||v||,   \; \;  v \in E^s_x - \{0\}.
\end{eqnarray*}

 We say that a partially hyperbolic diffeomorphism is \textit{dynamically coherent} if the subbundles $E^s \oplus E^c$ and $E^c \oplus E^u$ integrate into invariant foliations, $\mathcal F^{cs}, \mathcal F^{cu}$ respectively. This implies in particular that there is a center foliation $\mathcal F^c$, which is obtained by an intersection of the other two: $\mathcal F^c = \mathcal F^{cs} \cap \mathcal F^{cu}$. It was proved by Brin, Buragov, Ivanov \cite{BBI-T^3} that 

\begin{theo}
Every partially hyperbolic diffeomorphism on $\mathbb T^3$ is dynamically coherent. 
\end{theo}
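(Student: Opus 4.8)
The plan is to follow the branching-foliation strategy of Burago--Ivanov: construct a pair of $f$-invariant \emph{branching foliations} tangent to the two-dimensional distributions $E^s\oplus E^c$ and $E^c\oplus E^u$, and then exploit the topology of $\mathbb T^3$ to show that these branching objects carry no actual branching, hence are genuine foliations $\mathcal F^{cs}$ and $\mathcal F^{cu}$. Dynamical coherence then follows, and the center foliation is recovered as $\mathcal F^c=\mathcal F^{cs}\cap\mathcal F^{cu}$.

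First I would record the low-dimensional simplifications. On $\mathbb T^3$ each of the three invariant subbundles is one-dimensional, so $E^s\oplus E^c$ and $E^c\oplus E^u$ are plane fields. The strong stable and strong unstable foliations $\mathcal F^s$ (tangent to $E^s$) and $\mathcal F^u$ (tangent to $E^u$) exist and are unique by the standard Hadamard--Perron / graph-transform theory for partially hyperbolic systems, with no extra hypotheses needed. The entire difficulty lies in integrating the center-stable and center-unstable plane fields, which are only H\"older and need not be uniquely integrable a priori.

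Next, the construction of the branching foliations. I would approximate $E^s\oplus E^c$ by a smooth plane field, integrate it into honest surfaces, and pass to a limit using normal hyperbolicity: iterating $f$ contracts transverse distances coming from $E^u$, so forward images of the approximating surfaces converge to $C^1$ immersed surfaces tangent to $E^s\oplus E^c$. The resulting family is a branching foliation in the Burago--Ivanov sense --- through each point passes at least one complete surface tangent to the distribution, and two such surfaces may merge along open sets but cannot cross topologically transversally --- and by construction it is $f$-invariant. The symmetric argument with $f^{-1}$ produces a branching foliation tangent to $E^c\oplus E^u$.

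Finally, removing the branching, which I expect to be the main obstacle. Lifting to the universal cover $\mathbb R^3$, the crucial estimate is that each leaf is \emph{uniformly quasi-isometrically embedded}, so it lifts to a properly embedded plane that separates $\mathbb R^3$ into two half-spaces; this is precisely where the abelian fundamental group $\mathbb Z^3$ and the linear leaf growth it forces are used. A separating plane has a well-defined pair of complementary sides, and the one-dimensional strong foliations $\mathcal F^s,\mathcal F^u$ act as global transversals to the center-stable, respectively center-unstable, leaves. If two distinct leaves were to merge along part of their extent while remaining distinct elsewhere, following the transverse strong holonomy would force a topologically transversal crossing, contradicting the defining no-crossing property. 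Hence exactly one surface passes through each point, the branching foliations are genuine $f$-invariant foliations $\mathcal F^{cs}$ and $\mathcal F^{cu}$, and $\mathcal F^c=\mathcal F^{cs}\cap\mathcal F^{cu}$ is the desired center foliation, establishing dynamical coherence.
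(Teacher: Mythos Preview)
The paper does not give its own proof of this theorem: it is stated in the preliminaries and attributed to Brin, Burago and Ivanov \cite{BBI-T^3}, with no argument supplied. So there is no ``paper's proof'' to compare against; the statement is imported as a black box.

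That said, your sketch is broadly in the spirit of the Burago--Ivanov machinery, but it does not match the actual route taken in \cite{BBI-T^3}, and the step where you ``remove the branching'' is where the real content lies and where your outline is too loose to count as a proof. In \cite{BBI-T^3} the argument does not pass through branching foliations at all. Instead one first uses the linearization $A$ (via the Franks semiconjugacy, which you have available from the surrounding section of the paper) together with absolute partial hyperbolicity to prove that the strong stable and strong unstable foliations are \emph{quasi-isometric} in the universal cover: leafwise distance is uniformly comparable to ambient distance in $\mathbb R^3$. From quasi-isometry one deduces \emph{unique} integrability of $E^s\oplus E^c$ and $E^c\oplus E^u$ directly, by showing that any two local integral surfaces through a point must coincide; this is a geometric argument about how strong leaves pierce candidate center-stable plaques, not a topological no-crossing argument.

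Your paragraph on removing branching conflates two different mechanisms. Properly embedded separating planes plus a transverse one-dimensional foliation do \emph{not} by themselves rule out merging of leaves: two leaves can share an unstable ray and still never cross topologically, so ``following the transverse strong holonomy would force a crossing'' is not justified as stated. What actually closes the argument on $\mathbb T^3$ is the quasi-isometry of the strong leaves, which forces any such merging to contradict the uniform growth rates. If you want to salvage your approach, the missing ingredient you must establish first is precisely this quasi-isometry (this is where absolute partial hyperbolicity, as in the paper's definition with global constants $\lambda<\hat\gamma<1<\gamma<\mu$, is essential), and then the unique integrability follows without ever needing the branching-foliation formalism.
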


\subsection{Decomposition of measure}\label{sec.decomposition}

Let $(M, \mu, \mathcal B)$ be a probability space, where $M$ is a compact metric space, $\mu$ a probability and $\mathcal B$ the borelian $\sigma$-algebra.
Given a partition $\mathcal P$ of $M$ by measurable sets, we associate the following probability space $(\mathcal P, \widetilde \mu, \widetilde{\mathcal B})$, where $\widetilde \mu := \pi_* \mu$, $ \widetilde{\mathcal B}:= \pi_*\mathcal B$. and $\pi:M \rightarrow \mathcal P$ is the canonical projection associate to a point of $M$ the partition element that contains it.
 
For a given a partition $\mathcal P$, a family $\{\mu_P\}_{p \in \mathcal P}$ is a \textit{system of conditional measures} for $\mu$ (with respect to $\mathcal P$) if
\begin{itemize}
 \item[i)] given $\phi \in C^0(M)$, then $P \mapsto \int \phi \mu_P$ is measurable;
\item[ii)] $\mu_P(P)=1$ $\widetilde \mu$-a.e.;
\item[iii)] if $\phi \in C^0(M)$, then $\displaystyle{ \int_M \phi d\mu = \int_{\mathcal P}\left(\int_P \phi d\mu_P\right)d\widetilde \mu}$.
\end{itemize}
%\end{defi}

We call $\mathcal P$ a \textit{measurable partition} (w.r.t. $\mu$) if there exist a family $\{A_i\}_{i \in \mathbb N}$ of borelian sets and a set $F$ of full $\mu$-measure such that for every $P \in \mathcal{P}$ there exists a sequence $\{B_i\}_{i \in \mathbb N}$, where $B_i \in \{ A_i, A_i^c\}$ such that $P \cap F = \cap_{i \in \mathbb N}B_i \cap F$.

The following result is also known as Rokhlin's disintegration Theorem.

\begin{theo}\label{theo:rokhlin} 
 Let $\mathcal P$ be a measurable partition of a compact metric space $M$ and $\mu$ a borelian probability. Then there exists a disintegration by conditional measures for $\mu$.
\end{theo}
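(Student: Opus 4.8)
The plan is to reduce the problem to finite partitions, pass to the limit with Doob's martingale convergence theorem, and recover a probability on each leaf via the Riesz representation theorem. Since $\mathcal P$ is measurable, I would first fix a family $\{A_i\}_{i\in\mathbb N}$ of Borel sets and a full $\mu$-measure set $F$ as in the definition, and let $\mathcal P_n$ be the finite partition generated by $A_1,\dots,A_n$, with $\sigma$-algebras $\mathcal B_n:=\sigma(\mathcal P_n)$. The defining relation $P\cap F=\bigcap_i B_i\cap F$ says exactly that a point of $F$ is determined modulo $\mathcal P$ by the sequence $(\mathbf 1_{A_i})_i$; hence $\mathcal B_n$ increases to $\mathcal B_\infty:=\bigvee_n\mathcal B_n=\sigma(\mathcal P)$ modulo $\mu$, and each $A_i$ is, mod $\mu$, a union of elements of $\mathcal P$. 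For the finite partition $\mathcal P_n$ the disintegration is elementary: on each atom $P$ with $\mu(P)>0$ set $\mu_P^{\,n}:=\mu(\,\cdot\cap P)/\mu(P)$.

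Next I would fix a countable dense subset $\{\phi_k\}\subset C^0(M)$. For each $k$ the function $x\mapsto\int\phi_k\,d\mu^{\,n}_{\mathcal P_n(x)}$ is the conditional expectation $E_\mu[\phi_k\mid\mathcal B_n](x)$, a uniformly bounded martingale, so by Doob's theorem it converges $\mu$-a.e.\ and in $L^1(\mu)$ to $E_\mu[\phi_k\mid\mathcal B_\infty]$. Choosing once and for all the version $x\mapsto\lim_n E_\mu[\,\cdot\mid\mathcal B_n](x)$, on a full measure set $M_0$ on which all these limits exist the map $\phi_k\mapsto E_\mu[\phi_k\mid\mathcal B_\infty](x)$ is linear, nonnegative, bounded by $\|\cdot\|_\infty$, and sends $1$ to $1$ (all inherited from the $\mu^{\,n}$). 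For $x\in M_0$ it therefore extends uniquely to a positive normalized functional on $C^0(M)$, hence by Riesz to integration against a Borel probability $\mu_{\pi(x)}$; since the approximating functionals depend only on the atom through $x$, this is well defined as a function of $P=\pi(x)$. Property (i) is then immediate, as each $E_\mu[\phi\mid\mathcal B_n]$ is $\mathcal B_n$-measurable and the limit is $\mathcal B_\infty$-measurable, hence descends to $\mathcal P$.

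I would then extend the identity $\int\phi\,d\mu_{\pi(x)}=E_\mu[\phi\mid\mathcal B_\infty](x)$, valid $\mu$-a.e.\ for continuous $\phi$, to all bounded Borel $\phi$ by the functional monotone class theorem: the class of $\phi$ for which the identity holds $\mu$-a.e.\ is a vector space closed under bounded monotone limits and contains the multiplicatively closed family $C^0(M)$, which generates the Borel $\sigma$-algebra. Integrating gives (iii): $\int_M\phi\,d\mu=\int_M E_\mu[\phi\mid\mathcal B_\infty]\,d\mu=\int_M\big(\int\phi\,d\mu_{\pi(x)}\big)d\mu(x)=\int_{\mathcal P}\big(\int_P\phi\,d\mu_P\big)d\widetilde\mu$, the last step being the change of variables $\widetilde\mu=\pi_*\mu$ applied to the $\sigma(\mathcal P)$-measurable integrand. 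For (ii), I apply the extended identity to each of the countably many generating indicators $\mathbf 1_{A_i}$: since $A_i\in\mathcal B_\infty$ we have $E_\mu[\mathbf 1_{A_i}\mid\mathcal B_\infty]=\mathbf 1_{A_i}$, whence $\mu_{\pi(x)}(A_i)=\mathbf 1_{A_i}(x)$ for $\mu$-a.e.\ $x$. Intersecting the countably many full measure sets, on a full measure set $\mu_{\pi(x)}$ gives mass $1$ to the element $B_i(x)\in\{A_i,A_i^c\}$ containing $x$, for every $i$, hence to $\bigcap_i B_i(x)\cap F=P\cap F$; thus $\mu_P(P)=1$ for $\widetilde\mu$-a.e.\ $P$, and $\{\mu_P\}_{P\in\mathcal P}$ is the desired system.

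The hard part is property (ii): it cannot be read directly from the Riesz representation because the indicator of a partition element is not continuous, so one must route the information through a countable generating family — which is available precisely because $\mathcal P$ is measurable — and through the monotone class extension of the disintegration identity to bounded Borel functions, while carefully checking that the various exceptional sets are countable in number and therefore still $\mu$-null. The remaining ingredients (the martingale convergence, the Riesz step, the change of variables in (iii)) are routine once this is set up.
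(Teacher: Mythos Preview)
The paper does not prove this statement: it is merely stated as Rokhlin's disintegration theorem and used as a black box throughout, so there is no ``paper's own proof'' to compare against. Your proposal is a correct outline of the standard proof --- martingale convergence along the filtration $\mathcal B_n=\sigma(A_1,\dots,A_n)$, Riesz representation on $C^0(M)$ to produce the measures $\mu_P$, and a monotone class argument to extend the identity $\int\phi\,d\mu_{\pi(x)}=E_\mu[\phi\mid\mathcal B_\infty](x)$ from continuous $\phi$ to bounded Borel $\phi$. Your identification of the genuine obstacle, namely that property~(ii) cannot be read off the Riesz step and requires feeding the countable generating family $\{A_i\}$ through the extended identity, is exactly right; this is where the hypothesis that $\mathcal P$ be \emph{measurable} actually enters. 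One small point worth making explicit in a write-up: to conclude $\mu_{\pi(x)}(P)=1$ from $\mu_{\pi(x)}(\bigcap_i B_i(x))=1$ you also need $\mu_{\pi(x)}(F)=1$ for $\mu$-a.e.\ $x$, which follows by integrating $\mu_{\pi(x)}(F)$ against $\mu$ and using $\mu(F)=1$.
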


\textit{Remark.} On Theorem \ref{theo.not.leb.not.atomic} the meaning of ``disintegration of volume on the center leaves are neither Lebesgue, nor atomic'' means that on a foliated box, since the center foliation form a measurable partition we can apply on this foliated box the Rokhlin's disintegration Theorem and the conditional measures are neither Lebesgue, nor atomic. This is independent of the foliated box (see Lemma \ref{lemma:glue.measures.AVW}) and that is why we don't say instead that the disintegration locally is neither Lebesgue nor atomic.

\subsubsection{Absolute continuity}

Let $\mathcal F$ be a foliation and disintegrate the volume inside a foliated box. If the conditional measure $m_L$ on the leave satisfies that $m_L << Leb_L$ for almost every leaf, then $\mathcal F$ is said to be an \textit{absolutely continuous foliation}, where $Leb_L$ is the Lebesgue measure on the leaf $L$.

We state a result due to Gogolev \cite{gogolev-non.abs.cont} which shall be our starting point to understand absolute continuity for partially hyperbolic diffeomorphism with non-compact center leaves.

\begin{theo}\label{theo:gogolev}
Let $f:\mathbb T^3 \rightarrow \mathbb T^3$ be an Anosov diffeomorphism with splitting of the form $E^s\oplus E^{wu} \oplus E^{uu}$, then $\mathcal F^c_f$ is absolutely continuous if and only $\lambda^{uu}(p) = \lambda^{uu}(q)$ for all periodic points $p$ and $q$.
\end{theo}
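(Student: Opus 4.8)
Being a theorem of Gogolev, in the body of the paper this is taken as a black box; nonetheless, here is the strategy one would follow to establish it. Write the expanding rates as $1<\lambda^{wu}<\lambda^{uu}$, so that the center foliation is exactly the weak unstable foliation $\mathcal F^{wu}_f$, a one--dimensional sub--foliation of the two--dimensional unstable foliation $\mathcal F^{u}_f$ (tangent to $E^{wu}\oplus E^{uu}$). The first step is to reduce the problem to a single unstable leaf: it is classical that for a $C^{1+\alpha}$ Anosov diffeomorphism both $\mathcal F^s_f$ and $\mathcal F^u_f$ are absolutely continuous with conditionals equivalent to leafwise Lebesgue, so, since $\mathcal F^{wu}_f$ refines $\mathcal F^u_f$, absolute continuity of $\mathcal F^{wu}_f$ is equivalent to asking whether, inside a typical unstable leaf $W^u$, the sub--foliation $\mathcal F^{wu}_f|_{W^u}$ disintegrates $Leb_{W^u}$ into conditionals absolutely continuous with respect to leafwise Lebesgue. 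This in turn is controlled by the holonomies of $\mathcal F^{wu}_f$ between strong unstable transversals inside $W^u$: telescoping such a holonomy as $h=f^{n}\circ h_{-n}\circ f^{-n}$ expresses its Jacobian, formally, as an infinite product of ratios $\|Df|_{E^{uu}}(f^{-k}x)\|/\|Df|_{E^{uu}}(f^{-k}x')\|$ taken along two backward orbits $\{f^{-k}x\}$, $\{f^{-k}x'\}$ lying on a common weak leaf; after using the exponential convergence of such orbits and the H\"older regularity of $E^{uu}$, the convergence and uniformity of this product are governed entirely by the cohomology class of the cocycle $x\mapsto\log\|Df|_{E^{uu}}(x)\|$.

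For the direction in which equality of the periodic exponents implies absolute continuity: if $\lambda^{uu}(p)=\lambda^{uu}(q)$ for all periodic $p,q$, then by the Liv\v{s}ic theorem $\log\|Df|_{E^{uu}}\|=c+u\circ f-u$ for some constant $c$ and some H\"older $u$; the coboundary cancels in the telescoping product above, which makes the weak--unstable holonomies inside unstable leaves uniformly bounded and absolutely continuous, whence $\mathcal F^{wu}_f$ is absolutely continuous. Equivalently, one may use the conjugacy $H$ with $H\circ f=A\circ H$: matching the strong--unstable periodic data forces, by a de la Llave--Marco--Moriy\'on type argument carried out along a single strong unstable leaf, $H$ to be $C^{1}$ along strong unstable leaves, and then $H$ carries $\mathcal F^{wu}_f$ absolutely continuously onto the affine, hence smooth, foliation $\mathcal F^{wu}_A$.

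For the reverse direction, assume $\mathcal F^{wu}_f$ is absolutely continuous and follow how the conditional measures on center leaves transform under $f$. Near a periodic point $p$ of period $\pi$, $f^{\pi}$ expands $W^{wu}(p)$ by $\|Df^{\pi}|_{E^{wu}}(p)\|$ and the complementary (quotient) direction by $\|Df^{\pi}|_{E^{uu}}(p)\|$, their product being the unstable Jacobian there; absolute continuity then forces the transverse quotient measure to be, up to a bounded density and transported through the conjugacy $H$, the push--forward under $H^{-1}$ of Lebesgue measure on a strong unstable transversal of the linear model $A$, and such a push--forward is absolutely continuous precisely when the strong--unstable periodic Jacobians of $f$ all coincide, i.e. when $\lambda^{uu}$ is constant on periodic orbits. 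I expect the main technical obstacle to be exactly this last passage: turning a single mismatch $\lambda^{uu}(p)\neq\lambda^{uu}(q)$, via the Liv\v{s}ic obstruction it produces, into a positive--measure set of center leaves whose conditional densities degenerate, contradicting absolute continuity. This is a density/Besicovitch (or Ledrappier--Young entropy--and--dimension) type argument rather than a soft one, and it is the step for which one ultimately relies on \cite{gogolev-non.abs.cont}.
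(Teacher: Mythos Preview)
You are correct that the paper does not prove this theorem: it is quoted verbatim as a result of Gogolev and cited to \cite{gogolev-non.abs.cont}, with no argument supplied. There is therefore no paper-proof to compare your proposal against; your opening sentence already identifies this accurately, and nothing further is required.

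As for the sketch you volunteer, the overall architecture is the right one. The reduction to holonomies of $\mathcal F^{wu}$ between strong--unstable transversals inside a single $W^u$, and the control of those holonomies via the Liv\v{s}ic cohomology class of $\log\|Df|_{E^{uu}}\|$, is exactly how the ``constant $\lambda^{uu}$ on periodic points $\Rightarrow$ absolute continuity'' direction goes; your alternative via $C^1$ regularity of $H$ along strong unstable leaves is also legitimate and is in fact closer to how Gogolev actually argues. For the converse you are candid that the heart of the matter---producing, from a single mismatch $\lambda^{uu}(p)\neq\lambda^{uu}(q)$, a set of leaves on which the conditional densities blow up---is a genuine piece of work that you defer to \cite{gogolev-non.abs.cont}; that is an honest assessment, and the density/entropy--dimension flavor you point to is the right circle of ideas. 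One small sharpening: in practice the converse is often run not by directly analyzing conditional densities near two periodic points, but by showing that absolute continuity of $\mathcal F^{wu}$ forces the conjugacy $H$ to be absolutely continuous (hence differentiable a.e.) along $E^{uu}$, which immediately pins $\lambda^{uu}$ to the linear value at every periodic point; this is cleaner than the local two--point comparison you describe.
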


Where $\lambda^{uu}$ is the Lyapunov exponent on the $E^{uu}$ direction.

\subsection{Geometric property}

By a Derived from Anosov (DA) diffeomorphism $f:\mathbb T^3 \rightarrow \mathbb T^3$ we mean a partially hyperbolic homotopic to a linear Anosov diffeomorphism $A$. We call this linear Anosov as the linearization of $f$. In fact, $f$ is semi-conjugated to its linearization. The itens from the Theorem below, which proof can be found on Sambarino \cite{sambarino.hiperbolicidad.estabilidad}, show that the semi-conjugacy has in fact good properties.

\begin{theo}
 Let $B :\mathbb R^3 \rightarrow \mathbb R^3$ be a linear hyperbolic isomorphism. Then, there exists $C>0$ such that if $G: \mathbb R^3 \rightarrow \mathbb R^3$ is a homeomorphism such that $sup\{ ||G(x) - Bx|| \; | \; x \in \mathbb R^3 \} = K < \infty$ then there exists $H:\mathbb R^3 \rightarrow \mathbb R^3$ continuous and surjective such that:
\begin{itemize}
 \item $B \circ H = H \circ G$;
\item $||H(x) - x|| \leq CK$ for all $x \in \mathbb R^3$;
\item $H(x)$ is characterized as the unique point $y$ such that
$$ ||B^n(y) - G^m(x)|| \leq CK, \; \forall n \in \mathbb Z;$$
\item $H(x) = H(y)$ if and only if $||G^n(x) - G(y)|| \leq 2CK$, $\forall n \in \mathbb Z$, and if and only if $sup_{n \in \mathbb Z}\{ || G^n(x) - G^n(y)|| \} < \infty$;
\item if $B \in SL(3,\mathbb Z)$ and $G$ is the lift of $g: \mathbb T^3 \rightarrow \mathbb T^3$ then $H$ induces $h:\mathbb T^3 \rightarrow \mathbb T^3$ continuous and onto such that $B \circ h = h \circ g$ and $dist_{C^0}(h, id) \leq C dist_{C^0}(B,g)$.
\end{itemize}

\end{theo}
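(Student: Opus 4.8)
The plan is to realize $H$ as a bounded perturbation of the identity and to solve the conjugacy equation by a contraction argument adapted to the hyperbolic splitting of $B$. First I would fix the splitting $\mathbb R^3=E^s\oplus E^u$ into the $B$-stable and $B$-unstable subspaces and replace the Euclidean norm by an adapted (Lyapunov) norm, equivalent to it, for which $B_s:=B|_{E^s}$ and $B_u:=B|_{E^u}$ satisfy $\|B_s\|\le\lambda<1$ and $\|B_u^{-1}\|\le\lambda<1$; here $\lambda$ depends only on $B$. Writing $H=\mathrm{id}+u$ and $\Delta:=G-B$ (so that $\|\Delta\|_\infty=K$), a direct computation shows that $B\circ H=H\circ G$ is equivalent to the functional equation
\[
 B\,u(x)-u(Gx)=\Delta(x),\qquad x\in\mathbb R^3 .
\]
The key point is that neither solving this equation forward nor backward converges on its own; the hyperbolicity of $B$ is exactly what lets one contract the stable component going forward and the unstable component going backward.

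Accordingly, on the Banach space $X$ of bounded continuous maps $\mathbb R^3\to\mathbb R^3$ with the adapted sup-norm, I would introduce the operator
\[
 (\mathcal L w)(x)=B_s\,(P_s w)(G^{-1}x)-(P_s\Delta)(G^{-1}x)+B_u^{-1}(P_u w)(Gx)+B_u^{-1}(P_u\Delta)(x),
\]
where $P_s,P_u$ are the projections of the splitting; this uses that $G$ is a homeomorphism, so that $G^{-1}$ is available. Projecting the functional equation onto $E^s$ and $E^u$ shows that $u$ solves it if and only if $u=\mathcal L u$, and the two substitutions make $\mathcal L$ a contraction with factor $\lambda$. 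Banach's fixed point theorem then produces a unique bounded continuous $u$, with $\|u\|_\infty\le\|\mathcal L0\|_\infty/(1-\lambda)\le CK$ for a constant $C=C(B)$; this gives a continuous $H=\mathrm{id}+u$ with $B\circ H=H\circ G$ and $\|H(x)-x\|\le CK$, establishing the first two bullets and, incidentally, uniqueness of $H$ among maps at bounded distance from the identity.

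The remaining bullets I would deduce from expansivity of $B$ together with the bound $\|H-\mathrm{id}\|\le CK$. Iterating $B\circ H=H\circ G$ gives $B^nH=HG^n$ for all $n\in\mathbb Z$, whence $\|B^nH(x)-G^nx\|=\|u(G^nx)\|\le CK$, so $y=H(x)$ satisfies the orbit–shadowing condition of the third bullet (read with $G^n(x)$ in place of $G^m(x)$); uniqueness of such $y$ follows because if two points $y,y'$ keep $\|B^n(y-y')\|$ bounded for all $n\in\mathbb Z$ then $y-y'=0$, since any nonzero vector has either its forward (if it has nonzero unstable component) or backward (if it has nonzero stable component) $B$-orbit unbounded. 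For surjectivity, given $w\in\mathbb R^3$ the continuous map $x\mapsto w-u(x)$ carries the closed ball $\bar B(w,CK)$ into itself, so Brouwer's theorem yields $x$ with $H(x)=w$. The equivalences of the fourth bullet follow the same pattern: $H(x)=H(y)$ forces $H(G^nx)=H(G^ny)$ for all $n$ and hence $\|G^nx-G^ny\|\le 2CK$; the bounded-orbit condition trivially gives $\sup_n\|G^nx-G^ny\|<\infty$; and the latter forces $\|B^n(H(x)-H(y))\|$ bounded, so $H(x)=H(y)$ by expansivity once more.

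Finally, for the toral statement I would use that a lift $G$ of $g$ homotopic to $A=B\bmod\mathbb Z^3$ satisfies $G(x+v)=G(x)+Bv$ for $v\in\mathbb Z^3$, hence $G^n(x+v)=G^n(x)+B^nv$. Comparing the shadowing characterizations of $H(x+v)$ and $H(x)$ then yields $H(x+v)=H(x)+v$, so $H$ descends to a continuous surjection $h:\mathbb T^3\to\mathbb T^3$ with $A\circ h=h\circ g$, and $\mathrm{dist}_{C^0}(h,\mathrm{id})\le\|u\|_\infty\le C\,\mathrm{dist}_{C^0}(B,G)=C\,\mathrm{dist}_{C^0}(A,g)$. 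I expect the only genuinely delicate point to be the first one: arranging the splitting and the adapted norm so that the single operator $\mathcal L$ is a contraction, i.e. correctly matching the forward recursion to $E^s$ and the backward recursion to $E^u$. Once that is in place, all five properties are routine consequences of the fixed point, the uniform bound $CK$, and the expansivity of the hyperbolic matrix $B$.
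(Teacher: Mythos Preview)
Your argument is correct and is the standard functional-analytic proof of the shadowing/semiconjugacy lemma for hyperbolic linear maps: pass to an adapted norm, write $H=\mathrm{id}+u$, split the cohomological equation along $E^s\oplus E^u$, and contract. The handling of surjectivity via Brouwer and of the last bullet via the equivariance $G(x+v)=G(x)+Bv$ (forced by $\sup\|G-B\|<\infty$ together with $G$ being a lift) is also fine.

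There is nothing to compare against, however: the paper does not prove this theorem. It is quoted as a black box with the proof attributed to Sambarino \cite{sambarino.hiperbolicidad.estabilidad}, and the paper only uses its consequences (the existence of $h$ at bounded $C^0$-distance from the identity and the shadowing characterization). What you wrote is in all likelihood the same argument one finds in that reference, so you are not taking a different route so much as supplying the omitted proof. One small cosmetic point: you might make explicit that the passage between the adapted norm and the Euclidean norm is absorbed into the constant $C=C(B)$, so that the stated inequalities hold in the original norm.
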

The geometrical property we shall need later is given by Hammerlindl \cite{andy.hammerlindl-thesis}:

\begin{prop}\label{prop:geometric.property}
 Let $f$ be a partially hyperbolic and $A$ be its linearization. Denote by $\tilde f$ and $\tilde A$ the lift to $\mathbb R^n$ of $f$ and $A$ respectively. Then for each $k \in \mathbb Z$ and $C>1$ there is $M>0$ and a linear map $\pi:\mathbb R^n\rightarrow \mathbb R^n$  such that for all $x, y \in \mathbb R^n$
$$||x-y||> M \Rightarrow \frac{1}{C} < \frac{||\pi(\tilde f^k(x)-\tilde f^k(y))||}{||\pi(\tilde A^k(x) -\tilde A^k(y))||} < C.$$
\end{prop}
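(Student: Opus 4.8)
The plan is to reduce the statement to one elementary fact, namely that the lift $\tilde f$ stays at bounded distance from the linear map $\tilde A$, and then to cancel the direction-dependent stretching of $\tilde A^k$ by choosing $\pi=\tilde A^{-k}$.

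First I would record the key estimate: for every fixed $k\in\mathbb Z$ the map $\tilde f^k-\tilde A^k$ is bounded on $\mathbb R^n$, say $K_k:=\sup_{z\in\mathbb R^n}\|\tilde f^k(z)-\tilde A^k(z)\|<\infty$. For $k=1$ this is because $f$ is homotopic to $A$: the lift satisfies $\tilde f(x+v)=\tilde f(x)+Av$ and $\tilde A(x+v)=\tilde A(x)+Av$ for every $v\in\mathbb Z^n$, so $\tilde f-\tilde A$ is $\mathbb Z^n$-periodic and continuous, hence bounded. For general $k$ the boundedness propagates through the powers via $\tilde f^{k+1}-\tilde A^{k+1}=\tilde A\,(\tilde f^k-\tilde A^k)+(\tilde f-\tilde A)\circ\tilde f^k$ (and the analogous backward identity coming from $\tilde f^{-1}-\tilde A^{-1}=-\tilde A^{-1}(\tilde f-\tilde A)\circ\tilde f^{-1}$), using that $\tilde A$ is a fixed invertible linear map of finite operator norm.

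With $K_k$ in hand I would take $\pi:=\tilde A^{-k}$, a fixed invertible linear map (here $A$ is invertible, being the linearization). Since $\tilde A^k$ is linear, $\tilde A^k(x)-\tilde A^k(y)=\tilde A^k(x-y)$, so the denominator becomes exactly $\|\pi(\tilde A^k(x)-\tilde A^k(y))\|=\|\tilde A^{-k}\tilde A^k(x-y)\|=\|x-y\|$. Writing $\delta:=\tilde f^k-\tilde A^k$, the numerator is $\|\pi(\tilde f^k(x)-\tilde f^k(y))\|=\|(x-y)+\tilde A^{-k}(\delta(x)-\delta(y))\|$, and the perturbation is uniformly controlled: $\|\tilde A^{-k}(\delta(x)-\delta(y))\|\le 2\,\|\tilde A^{-k}\|\,K_k=:K'$, a constant depending only on $k$.

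The conclusion is then immediate from the triangle inequality: the numerator lies in the interval $[\,\|x-y\|-K',\ \|x-y\|+K'\,]$, hence the ratio lies in $[\,1-K'/\|x-y\|,\ 1+K'/\|x-y\|\,]$. Choosing $M$ so that $K'/M<\min\{C-1,\,1-1/C\}$ forces the ratio into $(1/C,\,C)$ whenever $\|x-y\|>M$, which is exactly the claim; note that $M$ depends on $k$ and $C$, while $\pi$ depends only on $k$. The only non-routine ingredient is the boundedness of $\delta$, and that rests solely on $f$ being homotopic to $A$; partial hyperbolicity is not otherwise needed in this particular estimate. Conceptually, the choice $\pi=\tilde A^{-k}$ is forced and is where the real content sits: without it the denominator $\|\tilde A^k(x-y)\|$ would collapse along the stable directions of $A$ and no uniform ratio bound could possibly hold, whereas $\tilde A^{-k}$ restores the isotropic quantity $\|x-y\|$, against which the uniformly bounded error $\delta$ becomes negligible at large scale. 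I would expect this identification of the correct $\pi$ to be the only genuine obstacle; once it is made, the remaining estimates are routine.
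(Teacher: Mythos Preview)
The paper does not supply its own proof of this proposition: it is quoted from Hammerlindl's thesis \cite{andy.hammerlindl-thesis} and used as a black box. So there is no ``paper's proof'' to compare against; your argument stands on its own, and it is correct for the statement exactly as written. The reduction to the uniform bound $K_k=\sup_z\|\tilde f^k(z)-\tilde A^k(z)\|<\infty$ together with the choice $\pi=\tilde A^{-k}$ is clean and makes the estimate a two-line triangle inequality.

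One caveat worth flagging. As the proposition is phrased here, $\pi$ is existentially quantified (``there is $M>0$ and a linear map $\pi$''), and your $\pi=\tilde A^{-k}$ witnesses that. But if you look at how the paper actually \emph{uses} the proposition---see the proof of Corollary~\ref{cor:lambdaf<lambdaA}, where $\pi$ is taken to be the projection onto the weak-stable (center) eigenspace of $A$---you see that the intended content is stronger: Hammerlindl's result lets one \emph{prescribe} $\pi$ to be any spectral projection of $A$ (onto an $A$-invariant subspace, along the complementary $A$-invariant subspaces), and then obtain the two-sided bound for that particular $\pi$. Your argument, as it stands, does not give this, because for a non-invertible projection the denominator $\|\pi(\tilde A^k(x-y))\|$ can be small even when $\|x-y\|$ is large (take $x-y$ close to $\ker\pi$). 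The fix is mild: for such a $\pi$ one has $\pi\tilde A^k=\tilde A^k\pi$, so the ratio equals
\[
\frac{\|\tilde A^k\pi(x-y)+\pi(\delta(x)-\delta(y))\|}{\|\tilde A^k\pi(x-y)\|},
\]
and the same triangle-inequality bound goes through once $\|\tilde A^k\pi(x-y)\|$ is large; one then needs the additional (and genuinely partial-hyperbolic/quasi-isometric) input that for points $x,y$ on the relevant invariant leaves, $\|x-y\|>M$ forces $\|\pi(x-y)\|$ large as well. That last step is where Hammerlindl's leaf geometry enters, and it is exactly what is used in the applications. So: your proof is correct for the literal statement, but be aware it proves slightly less than what the paper later invokes.
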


\section{Non-absolute continuity}\label{sec.non-abs.cont}

We dedicate this section for the proof of Theorem \ref{theo.not.leb.not.atomic}.

\subsection{Proof of item i)}

Consider a linear volume preserving Anosov with the following split $TM = E^{ss} \oplus E^{ws} \oplus E^{u}$. Let $\phi$ be a volume preserving diffeomorphism which preserves the $E^u$ direction. By Baraviera, Bonnatti \cite{baraviera.bonatti} $\int \lambda^{ws}_A \; dVol \neq \int \lambda^{ws}_{A\circ \phi} \; dVol$. Let $h$ be the conjugacy between $A$ and $f$, $f \circ h = h \circ A$. We claim that $h$ is volume preserving and sends center leaves to center leaves. To see that $h$ is volume preserving note that $f$ and $A$ have the same topological entropy $\lambda^{u}_A$. Hence, $h_*Vol$ is a measure of maximal entropy. Observe that the perturbation $A\circ \psi$ of $A$ is such that it preserves the $E^u$ exponent, which means that by the equilibrium state theory (see Bowen \cite{bowen-book}) the potentials $0$ and $-log ||Df_{|E^u}||$ are cohomological and therefore give the same equilibrium states. That is, $h_*Vol = Vol$. And the fact that $h(\mathcal F^c)= \mathcal F^c$ comes from Lemma 2 of \cite{
gogolev.guysinsky-c1.conjugacy}.

\textit{Claim:} $\mathcal F^c_{A\circ \psi}$ is not absolutely continuous.

Suppose, by contradiction, that it is absolutely continuous, then Theorem \ref{theo:gogolev} implies $\lambda^{ss}_f(p)= cte$ for all periodic point $p$. By contruction we have $\lambda^{u}_f(p)=\lambda^u_A$. Since we are on the volume preserving case, $\lambda^{ws}(p)$ is also constant on periodic points. Therefore, by Lemma \ref{lemma.conjugacy} $f$ is $C^1$-conjugate to $A$, but this would imply $\int \lambda^{ws}_f d Vol = \int \lambda^{ws}_A d Vol$. Which is absurd by Proposition 0.3 of Baravieira, Bonnatti \cite{baraviera.bonatti}.

\textit{Claim:} The disintegration of volume on center leaves of $A \circ \phi$ is neihter Lebesgue nor atomic.

It is not Lebesgue because it is not absolutely continuous. And to see that it is not atomic, note that since $h$ is volume preserving and sends center leaves onto center leaves we can induce (by push forward) the disintegration on the center leaves of $A$ to the center leaves of $A \circ \phi$. And since the disintegration for $A$ is Lebesgue, this means that the disintegration for $A \circ \phi$ is not atomic. \hfill $\Box$

\subsection{Proof of item ii)}

By ergodicity we know that the Birkhoff set $$ B=\{ x \in \mathbb T^3 \; | \; 1/n \sum_{i=0}^{n-1} \delta_{f^i(x)} \rightarrow Vol \text{ as } n \rightarrow \infty\} $$ has full measure.

\textit{Claim.}
 If there is a center leaf such that $\mathcal F^c \cap B$ has positive Lebesgue measure, then
the center foliation is absolutely continuous.

\textit{Proof of the Claim.} Let $D$ be any disc on the central foliation and consider the following construction
$$\mu_n = \frac{1}{n}\sum_{j=0}^{n-1} f_*^j\left(\frac{m_{D}}{m_D(D)}\right),$$
where $m_D$ means the Lebesgue measure on the central leaf. It turns out that these measures converge to a measure $\mu$ such that the disintegration of $\mu$ on the center leaves are absolutely continuous with respect to the Lebesgue measure. This is a well-known construction of measures, studied by Pesin, Sinai in the eighties. For more references see \cite{bonatti.diaz.viana} Chapter 11 and the references therein. Although Pesin, Sinai studied these measures for the case of the disc $D$ in the unstable foliation, for the center foliation, in our case, this construction is the same. Gogolev, Guysinsky \cite{gogolev.guysinsky-c1.conjugacy} have worked explicitly on this case and the reader may check at \cite{gogolev.guysinsky-c1.conjugacy} the construction.

We make a slightly different construction, instead of the disc $D$, as above, we take $D\cap B$ for which it has positive Lebesgue measure on the center leaf. By hypothesis there exists such a disc. It turns out that these measures still converge to a measure with conditional measures absolutely continuous to the Lebesgue measure on the center leaf (Lemma 11.12 \cite{bonatti.diaz.viana}). Since the points on $B$ have the property $1/n \sum_{i=0}^{n-1} \delta_{f^i(x)} \rightarrow Vol$, it turns out that the sequence $\mu_n$ converges to the volume. Hence, volume has Lebesgue disintegration on the center leaves. Which proves the claim.

From the claim, since we are in the case where the center foliation is non-absolutely
continuous, we must have that the center foliation intersects $B$ on a set of zero Lebesgue
measure. But the conditional measures give full measure to $B$, since $B$ has full measure. Therefore the conditional
measures are singular with respect to the Lebesgue measure. And item ii) is proved. \hfill $\Box$

\subsection{Proof of item iii)} 
On what follows $R_i$ will denote a rectangle of a fixed finite Markov partition. The proof of item iii) will be a consequence of the following lemmas.

\begin{lemma}
 All the atoms have the same weight when considering the disintegration of volume on the center leaves of $R_i$.
\end{lemma}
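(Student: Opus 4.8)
The plan is to use the dynamics to move atoms around and show that their weights must coincide. Suppose the disintegration of volume on the center leaves of a Markov rectangle $R_i$ is atomic. Since volume is $f$-invariant and $f$ permutes the Markov rectangles, the disintegration is essentially canonical: the system of conditional measures obtained in one rectangle must agree (up to the normalization coming from the transverse measure) with the one obtained in another rectangle whenever the two overlap, and with its image under $f$. This is exactly the content of the uniqueness part of Rokhlin's theorem combined with the fact (mentioned in the Remark after Theorem~\ref{theo:rokhlin} and Lemma~\ref{lemma:glue.measures.AVW}) that the disintegration is independent of the foliated box. So the first step is to record that the atomic structure, and in particular the \emph{set of weights} of atoms on a given center leaf, is an $f$-invariant, measurable object defined for a.e. center leaf.

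Next I would exploit ergodicity. By ergodicity of $f$ with respect to volume (which holds since $f$ is a $C^{1+\alpha}$ volume preserving Anosov on $\mathbb T^3$), any $f$-invariant measurable function is constant a.e. I would like to produce such a function from the weights. The natural candidate is something like ``the largest weight of an atom on the center leaf through $x$'', or more robustly the whole (sorted) list of atomic weights on the leaf; call it $w(x)$. The subtle point is that the conditional measures are defined per leaf, not per point, but since a full-volume set meets each relevant leaf, $w$ descends to a genuine measurable function on $M$ that is constant along center leaves. Invariance of the disintegration under $f$ gives $w \circ f = w$ a.e., hence $w$ is constant a.e.: there is a single profile of weights realized on almost every center leaf.

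Finally I would argue that this common profile must be a single value, i.e. all atoms on a leaf carry equal mass. The conditional measure on each leaf is a probability, so the weights on a leaf sum to $1$; if the profile were, say, $\{a_1 \ge a_2 \ge \cdots\}$ with not all equal, I would use the mixing/transitivity of the Anosov dynamics inside the center-stable or center-unstable leaves together with the Markov property to transport an atom of one weight onto an atom of another weight within the same leaf (the center leaf is a single stable or unstable leaf of the partially hyperbolic Anosov map, so $f$ or $f^{-1}$ acts on it by a uniform contraction/expansion, and the holonomy/iteration identifies atoms), forcing $a_i = a_j$. The main obstacle I expect is precisely this last step: making rigorous the claim that the dynamics acts transitively enough on the atoms \emph{within a single center leaf} to equate their weights, rather than merely permuting atoms between different leaves. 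One clean way around it, if available, is to observe that there are only finitely many atoms per leaf (this is item iii) we are building toward, or can be extracted from the finiteness of the Markov partition and invariance), label them, and note that any measurable choice of ``the $j$-th heaviest atom'' again yields an invariant function whose associated weight is constant; combined with the fact that the number of atoms is itself an invariant integer, ergodicity then pins down each weight, and the normalization $\sum_j a_j = 1$ forces them equal once one knows the labeling cannot be canonically broken by the dynamics.
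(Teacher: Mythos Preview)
Your proposal has a genuine gap at the step where you assert ``invariance of the disintegration under $f$ gives $w\circ f = w$ a.e.''  The conditional measures $m_x$ here are taken with respect to the partition of a Markov rectangle $R_i$ into local center plaques, and this partition is \emph{not} $f$-invariant: since the center direction is (say) weak-unstable, $f$ maps the center plaque through $x$ in $R(x)$ onto a strictly longer arc that properly contains the center plaque through $f(x)$ in $R(f(x))$.  Consequently $m_{f(x)}$ is only the \emph{normalized restriction} of $f_*m_x$ to a sub-arc, so one gets the inequality $m_x(x)=f_*m_x(f(x))\le m_{f(x)}(f(x))$, not equality.  Your ``profile of weights on a leaf'' therefore does not transform in any simple invariant way, and the last paragraph --- where you try to force all entries of the profile to coincide via transitivity on a single leaf --- is not needed once one uses the right function but, as you yourself note, is also not rigorously justified in your outline.

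The paper's argument sidesteps both difficulties with one stroke: instead of looking at the profile on a leaf, it looks at the pointwise weight $x\mapsto m_x(x)$ (the mass of the atom \emph{at} $x$) as a function on $\mathbb T^3$.  The Markov inclusion above yields $f^{-1}(A_\delta)\subset A_\delta$ for the sublevel set $A_\delta=\{x:m_x(x)\le\delta\}$; ergodicity then forces $\mathrm{Vol}(A_\delta)\in\{0,1\}$ for every $\delta$, so there is a single threshold $\delta_0$ at which the jump occurs and almost every atom has weight exactly $\delta_0$.  The point is that working with the weight at $x$ (rather than the multiset of weights on the plaque through $x$) turns the lack of exact invariance into a monotonicity, which is all that ergodicity needs, and it delivers ``all atoms have the same weight'' directly without any further transitivity argument inside a leaf.
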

\begin{proof} On each Markov rectangle we may apply Rokhlin's disintegration theorem on center leaves. Therefore, when writing $m_x$ we mean the conditional measure for the disintegration on Markov rectangle that contains $x$. Consider the set $A_\delta =\{ x \in A \; | \; m_{x}(x) \leq \delta \}$. Since $f(\mathcal F^{c}_{R(x)}(x)) \supset  \mathcal
F^{c}_{R(f(x))}(f(x))$, we
have that $f_*m_x(I) \leq m_{f(x)}(I)$ where $I$ is inside the connected component of $\mathcal F^c_{f(x)} \cap R(f(x))$ that contains $f^n(x)$. If $f(x) \in A_\delta$, then
$$m_x(x) = f_*m_x(f(x)) \leq  m_x(f(x)) \leq \delta.$$
Hence, $f^{-1}(A_\delta) \subset A_\delta$.

By ergodicity, since our Anosov is volume preserving on $\mathbb T^3$, $A_\delta$ has full measure or zero measure. Let $\delta_0$ be the discontinuity point of the function $\delta \in [0,1] \mapsto Vol(A_\delta)$. This implies that almost every atom has weight $\delta_0$.
\end{proof}

\begin{lemma}
 On every Markov partition $R_i$ the conditional measures have the same number of atoms,
with the same weight.
\end{lemma}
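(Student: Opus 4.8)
The plan is to leverage the previous lemma together with ergodicity of the volume-preserving Anosov on $\mathbb T^3$. From the previous lemma we already know that inside a single Markov rectangle $R_i$ almost every atom has the same weight $\delta_0$, where $\delta_0$ is the jump point of $\delta\mapsto \mathrm{Vol}(A_\delta)$; the key observation is that $\delta_0$ is defined intrinsically from the \emph{global} function $\delta\mapsto\mathrm{Vol}(A_\delta)$ and hence does not depend on the rectangle $R_i$. So the common atomic weight is the same across all rectangles of the Markov partition, and since the conditional measures are probabilities on the center plaques, each plaque carries exactly $1/\delta_0$ atoms; in particular $1/\delta_0$ is a fixed natural number $k$, the same for every $R_i$. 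That already gives the statement, but I would present it carefully because the conditional measures on different rectangles are \emph{a priori} unrelated disintegrations, so one must argue that the atomic structure is coherent.

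The key steps, in order: (1) Recall that $A_\delta=\{x : m_x(x)\le\delta\}$ is (up to null sets) backward invariant, hence by ergodicity has measure $0$ or $1$ for each $\delta$; let $\delta_0$ be the unique discontinuity of the monotone function $\delta\mapsto\mathrm{Vol}(A_\delta)$. Note that although $m_x$ depends on which Markov rectangle $R(x)$ is used, on a full-volume set each $x$ lies in the interior of exactly one rectangle, so $m_x(x)$ is well defined a.e., and the resulting $\delta_0$ is a single global constant. (2) Conclude from the previous lemma that, for every $i$, $\widetilde{m}$-a.e.\ center plaque of $R_i$ has all its atoms of weight $\delta_0$. (3) Since each conditional $m_x$ is a probability with atomic part of total mass contributed in chunks of size $\delta_0$, and (in the atomic case under consideration) the conditional measures are purely atomic, the number of atoms on a.e.\ plaque of $R_i$ equals $1/\delta_0=:k\in\mathbb N$, independent of $i$. (4) Finally, observe consistency: if a center leaf meets two rectangles $R_i$ and $R_j$, the plaques glue along the leaf, and since the weight $\delta_0$ is the same the counting is compatible, so the ``$k$ atoms per rectangle'' statement is the local manifestation of ``$k$ atoms (or orbits) per leaf''.

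The main obstacle I expect is step (1): making precise that the disintegration is done rectangle by rectangle while the quantity $\delta_0$ that controls the atomic weight is genuinely global. One has to check that $x\mapsto m_x(x)$ is a well-defined measurable function on a full-volume set (the boundaries of the Markov partition have zero volume, so this is fine), and that the set $A_\delta$ defined with these locally-disintegrated measures is still (essentially) $f^{-1}$-invariant — this is exactly the computation $m_x(x)=f_*m_x(f(x))\le m_{f(x)}(f(x))\le\delta$ from the previous lemma, which uses $f(\mathcal F^c_{R(x)}(x))\supset \mathcal F^c_{R(f(x))}(f(x))$. Once invariance and ergodicity pin down a single $\delta_0$, the rest is bookkeeping: every rectangle sees atoms of weight exactly $\delta_0$, hence exactly $k=1/\delta_0$ of them, and $k$ is forced to be a positive integer. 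A minor additional point to address is that the argument only constrains a.e.\ plaque in each rectangle, but that suffices to speak of ``the'' number of atoms in the measure-theoretic sense used throughout.
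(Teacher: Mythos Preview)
Your proposal is correct and follows exactly the paper's approach: the paper simply observes that since the previous lemma gives a single global weight $\delta_0$ for almost every atom, and since each conditional measure is a probability, there must be $1/\delta_0$ atoms per plaque in every $R_i$. Your steps (1)--(3) are a careful unpacking of this one-line argument, and your step (4) and the discussion of measurability of $x\mapsto m_x(x)$ add rigor that the paper leaves implicit.
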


\begin{proof} This is a direct consequence from the above lemma. Since all the atoms have the same
weight $\delta_0$ the conditional measures must have $1/\delta_0$ number of atoms.
\end{proof}

\begin{lemma} 
There is a set of full volume $B_1$, of atoms, such that if $x \in B_1$, then $B_1 \cap \mathcal F^c_x$ is contained in the connected component of  $R_{i(x)} \cap \mathcal F^c_x$ that contains $x$.
\end{lemma}
\begin{proof} Let $A$ be the set of atoms and $T$ be the set of transitive points. Both sets have full
volume measure by ergodicity. Suppose, by contradiction, that there is a subset $A_1 \subset
A$ of positive volume measure such that $\forall x \in A_1$ we get  $A \cap R_{i(x)} ^c \neq
\emptyset$, where $R_{i(x)} ^c$ is the complement of the Markov partition that contains $x$, note that $Vol(A_1 \cap T) >0$. Define the following map

\begin{eqnarray*}
 h:  A_1 \cap T & \rightarrow & \mathbb R \\ 
     x  & \mapsto & h(x) = d_{\mathcal F^c _x}(R_{i(x)},R_{i(x)}'),
\end{eqnarray*}
where $d_{\mathcal F^c _x}(R_{i(x)},R_{i(x)}')$ means the distance inside the center leaf of
the Markov rectangle $R_{i(x)}$ to the closest Markov
rectangle that has an atom which we call $R_{i(x)}'$.

Since $h$ is a measurable map, there exists $K_1 \subset A_1 \cap T$, with $Vol(K_1)>0$ for which $h$ is a continuous map when restricted to $K_1$. And since volume is a regular measure, there is compact set $K_2 \subset K_1$, also with positive volume measure. 

Let $\alpha = Max_{x \in K_2} h(x)$. Fix $z_0 \in R_{i(z_0)}$, and consider a ball small enough such that $B(z_0, r) \subset int R_{i(z_0)}$. Hence, $\forall y \in K_2$, let $n_y \in \mathbb N$ be an integer big enough so that, since $f$ is uniformly expanding in the center direction, $f^{-n_y}(\mathcal F^c(y,\alpha)) \subset B(z_0,r) \subset int R_{i(z_0)}$.

It means that we have at least doubled the number of atoms inside $R_{i(z_0)}$, which is an absurd since we have already shown that the number of atoms are constant on each Markov partition.
\end{proof}
\begin{lemma} 
There is a set of full volume $B_1$, of atoms, such that if $x \in B_1$, then $B_1 \cap \mathcal F^c_x$ is contained in the connected component of  $R_{i(x)} \cap \mathcal F^c_x$ that contains $x$.
\end{lemma}
\begin{proof} Let $A$ be the set of atoms and $T$ be the set of transitive points. Both sets have full
volume measure by ergodicity. Suppose, by contradiction, that there is a subset $A_1 \subset
A$ of positive volume measure such that $\forall x \in A_1$ we get  $A \cap R_{i(x)} ^c \neq
\emptyset$, where $R_{i(x)} ^c$ is the complement of the Markov partition that contains $x$, note that $Vol(A_1 \cap T) >0$. Define the following map

\begin{eqnarray*}
 h:  A_1 \cap T & \rightarrow & \mathbb R \\ 
     x  & \mapsto & h(x) = d_{\mathcal F^c _x}(R_{i(x)},R_{i(x)}'),
\end{eqnarray*}
where $d_{\mathcal F^c _x}(R_{i(x)},R_{i(x)}')$ means the distance inside the center leaf of
the Markov rectangle $R_{i(x)}$ to the closest Markov
rectangle that has an atom which we call $R_{i(x)}'$.

Since $h$ is a measurable map, there exists $K_1 \subset A_1 \cap T$, with $Vol(K_1)>0$ for which $h$ is a continuous map when restricted to $K_1$. And since volume is a regular measure, there is compact set $K_2 \subset K_1$, also with positive volume measure. 

Let $\alpha = Max_{x \in K_2} h(x)$. Fix $z_0 \in R_{i(z_0)}$, and consider a ball small enough such that $B(z_0, r) \subset int R_{i(z_0)}$. Hence, $\forall y \in K_2$, let $n_y \in \mathbb N$ be an integer big enough so that, since $f$ is uniformly expanding in the center direction, $f^{-n_y}(\mathcal F^c(y,\alpha)) \subset B(z_0,r) \subset int R_{i(z_0)}$.

It means that we have at least doubled the number of atoms inside $R_{i(z_0)}$, which is an absurd since we have already shown that the number of atoms are constant on each Markov partition.
\end{proof}

\begin{lemma}
There is a set of full volume $B_2 \subset B_1$ such that the center foliation intersects $B_2$ at most
on one point.
\end{lemma}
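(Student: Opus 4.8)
\emph{Sketch of the intended proof.} The plan is to show that $B_1$ itself, once it is made $f$-invariant, already meets every center leaf in at most one point, so that one can simply take $B_2=B_1$. First I would replace $B_1$ by $\bigcap_{n\in\mathbb Z}f^{n}(B_1)$: this set still has full volume (because $Vol$ is $f$-invariant) and it is contained in $B_1$, so it still satisfies the conclusion of the previous lemma and $i(\cdot)$ is still defined along it. Hence from now on I assume $B_1$ is $f$-invariant, and that for every $x\in B_1$ the set $B_1\cap\mathcal F^c_x$ is contained in the connected component $I_x$ of $R_{i(x)}\cap\mathcal F^c_x$ that contains $x$.

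Next I would record a purely geometric fact: since the Markov partition is finite, there is a constant $\alpha_0>0$ such that every connected component of $R_i\cap\mathcal F^c_y$ has length (in the induced leaf metric) at most $\alpha_0$, for all rectangles $R_i$ and all center leaves $\mathcal F^c_y$; in particular $\operatorname{length}(I_x)\le\alpha_0$ for all $x\in B_1$. Now I fix $x\in B_1$ and take any $p,q\in B_1\cap\mathcal F^c_x$; the goal is $p=q$. For every $n\ge 0$ we have $f^{n}(x)\in B_1$ and $f^{n}(p),f^{n}(q)\in B_1\cap\mathcal F^c_{f^{n}(x)}$ (here using that $f$ sends center leaves to center leaves and that $B_1$ is $f$-invariant); applying the previous lemma at the point $f^{n}(x)$ shows that $f^{n}(p)$ and $f^{n}(q)$ both lie in $I_{f^{n}(x)}$, a connected arc of length at most $\alpha_0$, so the sub-arc $\gamma_n$ of $\mathcal F^c_{f^{n}(x)}$ joining them has $\operatorname{length}(\gamma_n)\le\alpha_0$ for all $n\ge 0$. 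On the other hand, in this setting the center is uniformly expanded by $f$, hence $f^{-1}$ uniformly contracts it: there are $C>1$ and $\sigma>1$ with $\operatorname{length}(f^{-n}\gamma)\le C\sigma^{-n}\operatorname{length}(\gamma)$ for every center arc $\gamma$ and every $n\ge 0$. Applying this to $\gamma=\gamma_n$ and noting that $f^{-n}(\gamma_n)$ is exactly the sub-arc $\gamma_0$ of $\mathcal F^c_x$ joining $p$ and $q$, we get $d_{\mathcal F^c}(p,q)=\operatorname{length}(\gamma_0)\le C\sigma^{-n}\alpha_0$ for every $n$; letting $n\to\infty$ forces $p=q$.

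Consequently $\#\bigl(B_1\cap\mathcal F^c_x\bigr)\le 1$ for every $x\in B_1$, while trivially $\#\bigl(B_1\cap L\bigr)=0$ for any center leaf $L$ that misses $B_1$; so $B_2:=B_1$ works, which proves the lemma (and, combined with atomicity of the disintegration, gives exactly one atom per leaf, i.e.\ item iii)).

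\emph{On the main difficulty.} I do not expect a serious obstacle: essentially all the work was already done in the previous lemma, which upgraded ``the $B_1$-atoms on a leaf sit inside one Markov rectangle'' to ``they sit inside one component of (rectangle)$\,\cap\,$(leaf)''; the present statement is then forced by uniform hyperbolicity of the center via the contraction argument above. The two points that demand a little care are that $B_1$ can be taken genuinely $f$-invariant without destroying the component-confinement property (handled by intersecting along the orbit), and that the components of (Markov rectangle)$\,\cap\,$(center leaf) have uniformly bounded length --- which is where finiteness of the Markov partition enters. If one prefers to avoid the latter, one can instead mimic the ``doubling'' lemma: restrict to a compact positive-volume set on which $x\mapsto\operatorname{length}(I_x)$ is continuous, hence bounded by some $\alpha$, and run the same contraction argument there.
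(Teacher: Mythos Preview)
Your argument is correct and is in fact cleaner than the one in the paper. The paper argues by contradiction: assuming more than one atom per local center plaque, it defines the minimal leaf-distance between atoms, uses Lusin's theorem to pass to a compact positive-volume set where this distance is continuous (hence bounded below by some $\alpha>0$), and then iterates forward until any center arc of length $\alpha$ exceeds the minimal plaque length $\beta$; this spreads the atoms of a positive-measure set over distinct Markov rectangles, contradicting the previously established constancy of the atom count per rectangle.

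Your route is different: you make $B_1$ fully $f$-invariant and use directly that the confinement property of the previous lemma then holds along the \emph{entire forward orbit}, giving a uniform bound $\alpha_0$ on the leaf-distance between any two atoms of the same leaf at every time $n$; uniform expansion of the center then forces the two atoms to coincide. This avoids Lusin's theorem entirely and turns the statement into a two-line hyperbolicity argument, at the cost of needing the (easy) uniform bound on the center-plaque lengths of the finitely many Markov rectangles. The paper's approach, by contrast, reuses the ``doubling/separation'' mechanism of the earlier lemma and appeals to the atom-count rigidity; it is more in the spirit of the surrounding arguments but is less direct. Your closing remark that one could alternatively mimic the paper's Lusin-plus-separation scheme is exactly what the paper does.
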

\begin{proof} By contradiction suppose that the number of atoms on all Markov partition are greater than one. Let $A_2$ be a set with full volume measure inside the union of the Markov rectangle such that if $x \in A_2$, then $A_2 \cap \mathcal F^c _{x, loc}$ has the same number of points, in this case greater than one. Where $F^c _{x, loc}$ is the connected set of the center foliation restricted to the Markov rectangle that intersects $x$.  We define the map
\begin{eqnarray*}
 h : A_2 &\rightarrow &\mathbb R \\
 x &\mapsto & h(x)
\end{eqnarray*}
where $h(x)$ is the smallest distance between the atoms of $\mathcal F^c _{x, loc}$. By Lusin's theorem there is a set $K_1 \subset A_2$ of positive measure for which $h$ is continuous. Since volume is regular, there is a compact subset $K_2$ of $K_1$ with positive measure. Let $ \displaystyle \alpha = \min_{x \in K_2}h(x)$. 

Let $\beta >0$ be an inferior bound for the length of $\mathcal F^c_{loc}$. Let $n_0 \in \mathbb N$ big enough so that any segment of a center leaf with length greater than or equal to $\alpha$ has the length of its $n_0$th iterate greater than $\beta$. This means that $f^{n_0}(K_2)$, which has positive measure, have all the atoms separated from each other with respect to the Markov partition. Since we have a finite number of Markov partition, one of them must have a set with positive measure such that its leaves have only one atom. Hence all Markov partition must have one atom, absurd.
\end{proof}

\subsection{Proof of item iv)}

 Suppose $\{ \mathcal F^c_x\}_{x \in M}$ is a measurable partition, then we can apply Rokhlin's theorem and we decompose the volume on probabilities $m_x$ on center leaves. Let $$A_L =\{ x \in M \; | \; m_x(\mathcal F^c_L(x))\geq 0.6\},$$ where $ \mathcal F^c_L(x)$ is the segment of $\mathcal F^c(x)$ of length $L$ on the induced metric and centered at $x$.

Note that there is $L \in \mathbb R$ such that $vol(A_L)>0$. Let us suppose that $f$ contracts the center leaf, then $f^{-1}(\mathcal F^c_L(f(x))) \supset \mathcal F^c_L(x)$. Since $f_*m_x = m_{f(x)}$, for $x \in A_L$,
$$m_{f(x)}(\mathcal F^c_L(f(x))) = m_{x}(f^{-1}(\mathcal F^c_L(f(x)))) \geq m_x(\mathcal F^c_L(x)) \geq 0.6.$$

So $f(x) \in A_L$, by ergodicity $f(A_L)\subset A_L$ implies $Vol(A_L)=1$.

\textit{Claim:} $diam^c A_L \cap \mathcal F^c_x \leq 2L$, where $diam^c$ means the diameter of the set inside the center leaf.

Suppose there exist $y_1, y_2 \in A_L \cap \mathcal F^c_x$ with $d^c(y_1,y_2) >2L$. Then
$$\mathcal F^c_L(y_1) \cap \mathcal F^c_L(y_2) = \emptyset \; \text{ and } \; m_x(\mathcal F^c_L(y_i))\geq 0.6, \; i=1,2.$$
Then $$1 \geq  m_x(\mathcal F^c_L(y_1) \cup \mathcal F^c_L(y_2)) = m_x(\mathcal F^c_L(y_1)) + m_x(\mathcal F^c_L(y_2)) \geq 0.6 +0.6 = 1.2.$$
This absurd concludes the proof of the claim.

\textit{Claim:} The decomposition has atom.

Define $$L_0 = inf \{ L \in [0,\infty) \;|\; Vol(A_L)=1 \}.$$
Note that $Vol(A_{L_0})=1$, to see that take a sequence $L_n \rightarrow L_0$ and observe that $A_{L_0}= \cap_i A_{L_n}$. Let $\lambda = inf || Df^{-1}|E^c||$, let $\varepsilon < 1$ be such that $\varepsilon \lambda > 1$. For $x \in A_{\lambda L_0}$
$$m_{f(x)}(\mathcal F^c_{\varepsilon L_0}(f(x))) = m_{x}(f^{-1}(\mathcal F^c_{\varepsilon L_0}(f(x)))) \geq m_x(\mathcal F^c_{L_0}(x)) \geq 0.6.$$

Therefore $f(x)\in A_{\varepsilon L_0}$. By ergodicity we may suppose $A_{L_0}$ $f$-invariant, hence $Vol(A_{\varepsilon L_0})=1$. Absurd since $\varepsilon L_0 < L_0$. This means that $L_0 =0$, which implies atom. 

Let us prove the converse. Suppose we have atomic decomposition, we want to see that the partition through center leaves is a measurable partition.

Lift $f$ to $\mathbb R^3$, by Hammerlindl \cite{andy.hammerlindl-thesis} we may find a disk $\tilde D^2$ transverse to the center foliation, by quasi-isometry of the center foliation we may take this disk as big as we want. So take a disk such that its projection $D^2 = \pi(\tilde D^2)$ has the property:
$$\mathcal F^c_x \cap D^2 \neq \emptyset, \; \forall x \in \mathbb T^3.$$

Since the decomposition is atomic, we already know that it has one atom per leaf. Let us define the following set of full measure:
$$\hat M = \bigcup_{p \in A} \mathcal F^c_{loc}(p),$$
where $A$ is the set of atoms, $\mathcal F^c_{loc}(p)$ is the segment of center leaf such that the right extreme point is $p$ and the left extreme point is on $D^2$ and $\# \mathcal F^c_{loc}(p) \cap D^2 =1$.

Since $D^2$ is a separable metric space, $\{\mathcal F^c_{loc}(p)\}_{p \in A}$ is a measurable partition for $\hat M$. Therefore we have a family of subsets $\{A_i\}_{i \in \mathbb N}$ of $\hat M$ such for all $p \in A$
 $$\mathcal F^c_{loc}(p)= \bigcap_{i\in \mathbb N} B_i, \text{ where } B_i \in \{A_i, A_i ^c\}.$$
\hfill $\Box$

\section{Conjugacy}\label{sec.conjugacy}
We begin by understanding how Lyapunov exponents vary with respect to their linearization.

\begin{prop}\label{prop:lambdaf<lambdaA}
 Let $f:\mathbb T^3 \rightarrow \mathbb T^3$ be a partially hyperbolic, not necessarily ergodic nor volume preserving, and let $A$ be
its linearization. Then $\int \lambda^{u}(f) dVol \leq \lambda^{u}_A$.
\end{prop}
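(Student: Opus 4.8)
The plan is to compare the volume growth of unstable discs under $f$ with the corresponding growth under the linearization $A$, exploiting the semiconjugacy $h \circ f = A \circ h$ together with the geometric property of Proposition \ref{prop:geometric.property}. First I would recall that $\int \lambda^u(f)\,dVol$ can be expressed as an asymptotic exponential growth rate: by the subadditive ergodic theorem (or directly, since $E^u$ is one-dimensional), $\int \lambda^u(f)\,dVol = \lim_n \frac{1}{n}\int \log \|Df^n|_{E^u}\|\,dVol$. Even more usefully, this quantity is bounded above by the topological entropy of $f$, and in fact for the one-dimensional unstable bundle the integrated unstable exponent equals the exponential growth rate of the length of unstable curves. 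So the core estimate I want is: the length of $\tilde f^n(J)$, for $J$ a unit-length unstable arc in the universal cover, grows no faster than $\|\tilde A^n\|$ up to subexponential factors.

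The key step is the geometric comparison. Take $J$ an unstable segment for $\tilde f$; its endpoints $x, y$ satisfy $\|\tilde f^n(x) - \tilde f^n(y)\| \to \infty$, and once this exceeds the constant $M$ from Proposition \ref{prop:geometric.property} (applied with $k = n$, or rather iterating the $k=1$ statement), we get
$$\|\pi(\tilde f^n(x) - \tilde f^n(y))\| \le C\,\|\pi(\tilde A^n(x) - \tilde A^n(y))\| \le C\,\|\tilde A^n\|\cdot\|x-y\|.$$
Since $\pi$ is a fixed linear map, this controls the displacement of endpoints, and then a bounded-distortion / covering argument along the unstable leaf upgrades control of endpoint displacement to control of arclength: an unstable arc of $\tilde f$-length $L$ can be covered by $O(L)$ arcs of bounded length, so its image has length $O(\|\tilde A^n\|)$ times a subexponential correction. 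Taking logs, dividing by $n$, and letting $n \to \infty$ gives $\mathrm{(growth\ rate\ of\ unstable\ length\ under\ }f) \le \lambda^u_A$, where $\lambda^u_A = \lim \frac1n \log\|\tilde A^n|_{E^u_A}\|$ is the unstable exponent of $A$ (using that $A$ is hyperbolic with one expanding direction on $\mathbb T^3$, or summing expanding exponents in general). Finally, identifying the exponential growth rate of unstable length with $\int \lambda^u(f)\,dVol$ — this uses that volume-a.e.\ point has a Pesin unstable manifold on which lengths grow like $e^{n\lambda^u(\cdot)}$, integrated via Birkhoff/Kingman — closes the argument.

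The main obstacle I expect is the passage from \emph{endpoint displacement} to \emph{arclength}, i.e.\ making rigorous that Proposition \ref{prop:geometric.property} (a statement about pairs of points) controls the length of the whole curve. The subtlety is that an unstable curve can be very wiggly at small scales even while its endpoints are close to where $\tilde A^n$ would send them; one must use that on the universal cover the unstable leaves of a partially hyperbolic $\tilde f$ are quasi-isometrically embedded (again Hammerlindl) so that intrinsic length is comparable to extrinsic distance at large scales, and then chain the large-scale estimate along a cover of $J$ by unit arcs. A secondary technical point is handling the transient regime where $\|\tilde f^n(x)-\tilde f^n(y)\| \le M$, but this only costs a bounded multiplicative constant independent of $n$, hence disappears after dividing by $n$. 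The hyperbolicity of $A$ guarantees $M$ is eventually exceeded, so this regime is genuinely finite.
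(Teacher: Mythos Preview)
Your overall strategy --- compare growth along unstable leaves of $\tilde f$ with growth under $\tilde A$ via Proposition~\ref{prop:geometric.property} and quasi-isometry --- matches the paper's, but the step you label ``identifying the exponential growth rate of unstable length with $\int \lambda^u(f)\,dVol$'' is where the argument breaks. The exponential growth rate of $l(\tilde f^n(J))$ for a \emph{fixed} unstable arc $J$ is governed by the values of $\lambda^u$ along that single leaf, not by the volume average $\int \lambda^u\,dVol$; the two are not the same when $f$ is non-ergodic or when volume is not the natural measure on leaves. Your invocation of Birkhoff/Kingman does not bridge this: those theorems relate $\frac{1}{n}\log\|Df^n|_{E^u_x}\|$ to $\lambda^u(x)$ pointwise, not to any arc-length integral, and saying the arc ``grows like $e^{n\lambda^u(x)}$'' at a volume-typical $x$ already presupposes what you are trying to prove.

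The paper closes this gap by proceeding by contradiction and using the one ingredient you omit: \emph{absolute continuity of the unstable foliation}. If $\int\lambda^u_f\,dVol>\lambda^u_A$, there is a set $B$ of positive volume and $\alpha>\lambda^u_A$ with $\lambda^u_f>\alpha$ on $B$; passing to a Pesin-type sublevel set $B_{N_0}$ and using absolute continuity of $\mathcal F^u$ yields an unstable segment $I$ meeting $B_{N_0}$ in positive \emph{leaf} measure. Then $l(f^n(I))\ge e^{n\alpha}\cdot\mathrm{Leb}^u(I\cap B_{N_0})$ is a genuine lower bound on arc length, quasi-isometry converts this to an endpoint-distance lower bound, and the ratio against $d(A^n x,A^n y)\lesssim e^{n\lambda^u_A}$ blows up, contradicting Proposition~\ref{prop:geometric.property}. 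This is precisely the leaf-vs-volume transfer your proposal needs but never performs.

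A smaller point: ``iterating the $k=1$ statement'' of Proposition~\ref{prop:geometric.property} gives a comparison constant $C^n$, which is exponential in $n$ and destroys the estimate. You must apply it once with $k=n$ (or, equivalently, use the semiconjugacy $h$ directly, since $\|h-\mathrm{id}\|$ is uniformly bounded), and then the ``transient regime'' remark is where the $n$-dependence of $M$ has to be absorbed.
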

\begin{proof} Suppose that $\int \lambda^u_f(x) dVol(x) > \lambda^u_{A}$, then there exists a set $B$ of positive volume and a constant $\alpha$ such that $\lambda^u_f(x) > \alpha > \lambda^u_{f^*}$, $\forall x \in B$. Define 
$$B_N = \{ x \in B \; | \; ||Df^n|E^u_x|| \geq e^{n\alpha}; \; \forall n \geq N  \}.$$
Note that $$B=\bigcup_{N=1}^\infty B_N,$$ 
this means that there is $N_0$ such that $Vol(B_{N_0})>0$. Since $\mathcal F^u_f$ is absolutely continuous then there is $x \in B$ such that $\mathcal F^u_f(x)\cap B_{N_0}$ has positive volume on the unstable leaf.

Let $I \subset \mathcal F^u_f(x)$ be a compact segment with $Vol^c(I \cap B_{N_0})>0$ and $length(I)=:l(I) >M$. Then
\begin{eqnarray*}
 l(f^n(I)) &=& \int _{f^n(I)} d Vol^u = \int_I (f^n)^* dVol^u \geq \int_{I \cap A_{N_0}}(f^n)^* dVol^u \\
&\geq & \int_{I \cap B_{N_0}} ||Df^n|E^u_x|| d Vol^u(x) \geq e^{n\alpha} Vol^c(I \cap A_{N_0}).
\end{eqnarray*}

Consider $x,y$ the extremes of $I=[x,y]$. Then $d^u(f^n(x),f^n(y)) = l(f^n(I))$. Using quasi-isometry on the first inequality below we get

\begin{eqnarray*}
 \frac{d(f^n(x),f^n(y))}{d(A^n(x),A^n(y))} &\geq& cte \frac{d^u(f^n(x),f^n(y))}{d(A^n(x),A^n(y))} \\
 &\geq & cte \frac{e^{n\alpha}}{e^{n\lambda^u_A}} \frac{Vol(I \cap B_{N_0})}{d(x,y)}\\ &\longrightarrow& \infty \text{ as } n \rightarrow \infty.
\end{eqnarray*}

By Proposition \ref{prop:geometric.property} this ratio should be bounded. Absurd.
\end{proof}

The same type of argument above give us:
\begin{cor}\label{cor:expoente.estavel.menor}
 $$ \int \lambda^s(f) \geq \lambda^s(A).$$
\end{cor}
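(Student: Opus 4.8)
The plan is to mirror the proof of Proposition \ref{prop:lambdaf<lambdaA} almost verbatim, simply replacing the unstable direction by the stable one and iterating by $f^{-1}$ instead of $f$. Concretely, I would argue by contradiction: suppose $\int \lambda^s(f)\,dVol < \lambda^s(A)$ (recall both are negative), so that there is a positive-volume set $B$ and a constant $\alpha$ with $\lambda^s_f(x) < \alpha < \lambda^s_A$ for all $x\in B$. This means points in $B$ contract \emph{faster} along $E^s$ under $f$ than the expected rate $\lambda^s_A$, hence expand faster under $f^{-1}$. Define $B_N=\{x\in B \mid \|Df^{-n}|E^s_x\| \ge e^{-n\alpha}\ \forall n\ge N\}$, note $B=\bigcup_N B_N$, and pick $N_0$ with $Vol(B_{N_0})>0$.

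Next I would use absolute continuity of the strong stable foliation $\mathcal F^s_f$ (which holds for $C^{1+\alpha}$ partially hyperbolic diffeomorphisms) to produce a stable leaf $\mathcal F^s_f(x)$ meeting $B_{N_0}$ in positive leaf-measure, and inside it a compact segment $I=[x,y]$ with $Vol^s(I\cap B_{N_0})>0$ and $length(I)>M$, where $M$ is the constant from Proposition \ref{prop:geometric.property} applied with $k=-1$ (or any fixed negative power). Then, exactly as in the displayed computation in the proof of Proposition \ref{prop:lambdaf<lambdaA}, I would estimate
\begin{equation*}
length(f^{-n}(I)) = \int_I (f^{-n})^*\,dVol^s \ge \int_{I\cap B_{N_0}} \|Df^{-n}|E^s_x\|\,dVol^s(x) \ge e^{-n\alpha}\,Vol^s(I\cap B_{N_0}).
\end{equation*}
Since $f^{-n}$ along $E^s$ grows like $e^{-n\alpha}$ while $A^{-n}$ along $E^s_A$ grows like $e^{-n\lambda^s_A}$, and $-\alpha > -\lambda^s_A$, quasi-isometry of the stable foliation plus $d^s(f^{-n}(x),f^{-n}(y)) = length(f^{-n}(I))$ gives
\begin{equation*}
\frac{d(f^{-n}(x),f^{-n}(y))}{d(A^{-n}(x),A^{-n}(y))} \ge cte\cdot \frac{e^{-n\alpha}}{e^{-n\lambda^s_A}}\cdot\frac{Vol^s(I\cap B_{N_0})}{d(x,y)} \longrightarrow \infty,
\end{equation*}
contradicting the boundedness supplied by Proposition \ref{prop:geometric.property} (applied to the negative iterate). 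This yields $\int \lambda^s(f)\,dVol \ge \lambda^s(A)$, which is the claim once one notes $\lambda^s(A)$ is deterministic so its integral equals itself.

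The only genuine point requiring care — the ``main obstacle,'' such as it is — is the first inequality $d(z,w) \ge cte\cdot d^s(z,w)$ for points $z,w$ on a common strong stable leaf: one needs that the intrinsic (leaf) distance is comparable, at large scales, to the ambient distance along strong stable manifolds of a DA diffeomorphism. This is precisely the quasi-isometry property of the strong foliations established by Hammerlindl \cite{andy.hammerlindl-thesis} in the same setting used for Proposition \ref{prop:geometric.property}, and it is exactly the ingredient implicitly invoked in the proof of Proposition \ref{prop:lambdaf<lambdaA} under the label ``using quasi-isometry''; so for the corollary it suffices to cite the analogous statement for $\mathcal F^s$. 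Everything else is a routine transcription of the preceding proof with $f\leadsto f^{-1}$, $E^u\leadsto E^s$, and the inequalities reversed because the stable exponents are negative.
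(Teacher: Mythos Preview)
Your proposal is correct and matches the paper's approach exactly: the paper simply states that ``the same type of argument above gives us'' the corollary, and your write-up is precisely that argument carried out in detail (swap $E^u$ for $E^s$, iterate by $f^{-1}$, and invoke quasi-isometry of the strong stable foliation from \cite{andy.hammerlindl-thesis}). One could shorten it further by noting that applying Proposition~\ref{prop:lambdaf<lambdaA} verbatim to $f^{-1}$ (whose unstable bundle is $E^s_f$ and whose linearization is $A^{-1}$) yields $\int(-\lambda^s_f)\,dVol \le -\lambda^s_A$, which is the desired inequality.
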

We consider the following for the case of Anosov systems, for it will be used later.
\begin{cor}\label{cor:lambdaf<lambdaA}
 Let $f$ be an Anosov diffeomorphism with the following split on the tangent space $TM = E^{ss}\oplus E^{ws}\oplus E^u$ and $\mathcal F^{ws}$ absolutely continuous. Then $\lambda^{ws}_f \geq \lambda^{ws}_A$.
\end{cor}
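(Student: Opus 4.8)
The plan is to rerun the argument of Proposition \ref{prop:lambdaf<lambdaA}, but for the inverse map $g=f^{-1}$ and for the \emph{weak} foliation, which for $g$ is a weak-\emph{un}stable foliation. Under $f^{-1}$ the bundle $E^{ws}_f$ becomes a one-dimensional, weakly expanding bundle $E^{wu}_g$, with $\mathcal F^{wu}_g=\mathcal F^{ws}_f$ and $\lambda^{wu}_g(x)=-\lambda^{ws}_f(x)$ for volume-a.e.\ $x$; the linearization of $g$ is $A^{-1}$, whose weak-unstable eigendirection is exactly $E^{ws}_A$ and whose weak-unstable exponent is $-\lambda^{ws}_A$. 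So the claimed inequality $\lambda^{ws}_f\ge\lambda^{ws}_A$ is literally $\lambda^{wu}_g\le\lambda^{wu}_{A^{-1}}$, i.e.\ the statement of Proposition \ref{prop:lambdaf<lambdaA} with the strong unstable foliation replaced by the weak-unstable one.

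Concretely I would argue by contradiction. If $\lambda^{wu}_g(x)>\alpha>\lambda^{wu}_{A^{-1}}$ on a set $B$ of positive volume, put
\[
B_N=\{\,x\in B:\ \|Dg^n|E^{wu}_x\|\ge e^{n\alpha}\ \text{for all }n\ge N\,\},
\]
so some $B_{N_0}$ has positive volume. This is where the hypothesis enters: since $\mathcal F^{ws}_f=\mathcal F^{wu}_g$ is \emph{absolutely continuous} — which for the strong unstable foliation is automatic, hence must be assumed here — there is a weak-unstable segment $I$, of length $>M$, with $\mathrm{Vol}^{wu}(I\cap B_{N_0})>0$, and then, exactly as in Proposition \ref{prop:lambdaf<lambdaA},
\[
\mathrm{length}(g^n(I))\ \ge\ \int_{I\cap B_{N_0}}\|Dg^n|E^{wu}_x\|\,d\mathrm{Vol}^{wu}(x)\ \ge\ e^{n\alpha}\,\mathrm{Vol}^{wu}(I\cap B_{N_0}).
\]
Let $x,y$ be the endpoints of $I$. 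Since the weak foliations of an Anosov map on $\mathbb T^3$ are quasi-isometric in the universal cover (Hammerlindl \cite{andy.hammerlindl-thesis}), $d(g^n x,g^n y)$ is comparable to $\mathrm{length}(g^n(I))$, hence grows at least like $e^{n\alpha}$; while, by Proposition \ref{prop:geometric.property} applied to $g$ and $A^{-1}$ with its linear map $\pi$ (which in Hammerlindl's construction is, up to details, the $A$-equivariant projection onto $E^{ws}_A$, so that $\pi A^{-n}=A^{-n}\pi$ and $\pi$ does not collapse the long weak-unstable curve $g^n(I)$), the $\pi$-image of the separation under $A^{-1}$ grows only like $e^{-n\lambda^{ws}_A}=e^{n\lambda^{wu}_{A^{-1}}}$. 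As $\alpha>\lambda^{wu}_{A^{-1}}$ the ratio in Proposition \ref{prop:geometric.property} diverges, a contradiction. Letting $\alpha\downarrow\lambda^{wu}_{A^{-1}}$ gives $\lambda^{wu}_g\le\lambda^{wu}_{A^{-1}}$ volume-a.e., that is $\lambda^{ws}_f\ge\lambda^{ws}_A$ (a genuine constant once $f$ is volume preserving and hence ergodic).

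The point I expect to be the real obstacle is making the last comparison against the \emph{weak}-unstable rate of $A^{-1}$ rather than its faster strong-unstable rate: in Proposition \ref{prop:lambdaf<lambdaA} the relevant bundle was the strong unstable, which is coarsely aligned with the top eigendirection of $A$, so the separation under $A$ automatically expands at the correct rate, whereas here one needs that weak-unstable leaves of $g$ stay within bounded Hausdorff distance of affine lines parallel to $E^{ws}_A$. This is precisely the coarse geometry of the weak foliations supplied by Hammerlindl and encoded in Proposition \ref{prop:geometric.property} via the correct choice of $\pi$; with it in hand the rest is a routine copy of the proof of Proposition \ref{prop:lambdaf<lambdaA}.
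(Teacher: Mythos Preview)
Your proposal is correct and matches the paper's approach: the paper's proof simply says the argument ``goes as before, with a minor change,'' namely applying Proposition~\ref{prop:geometric.property} with $\pi$ taken to be the projection onto the $E^{ws}_A$ direction in the coordinates $(x_{ss},x_{ws},x_u)$ of the linearization---exactly the choice of $\pi$ you identify as the crux. Your explicit passage to $g=f^{-1}$ (turning $E^{ws}$ into a weakly expanding bundle so the growth argument of Proposition~\ref{prop:lambdaf<lambdaA} applies verbatim) and your emphasis that absolute continuity of $\mathcal F^{ws}$ replaces the automatic absolute continuity of $\mathcal F^u$ are precisely the details the paper leaves implicit.
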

\begin{proof} The prove goes as before, with a minor change. We proceed, as previously, applying Proposition \ref{prop:geometric.property}  with the following linear map $\pi: \mathbb R^n \rightarrow \mathbb R^n$ which is the projection onto a center foliation of the linearization. The projection is with respect to the system of coordinate given by the foliations of the linearization $(x_{ss},x_{ws}, x_{u}) \in \mathbb R^n$.
\end{proof}

\subsection{Proof of Lemma \ref{lemma.conjugacy}}

We only have to prove the implication, as the converse is a direct consequence of the $C^1$-conjugacy.

 Let us suppose that $f$ is partially hyperbolic with the following split of the tangent space: $TM = E^{ss} \oplus E^{ws} \oplus E^{u}$. 
The next three lemmas concern this case, the other case is reduced to this one by applying the inverse.

\begin{lemma}
 $$\lambda^{u}_f(m) = \lambda^{u}_f(p), \; \forall p \in Per(f).$$
\end{lemma}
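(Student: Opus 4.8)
The statement to prove is that for a volume-preserving partially hyperbolic Anosov diffeomorphism $f$ on $\mathbb{T}^3$ with splitting $TM = E^{ss}\oplus E^{ws}\oplus E^u$, the unstable Lyapunov exponent at Lebesgue-a.e. point $m$ equals the unstable exponent at every periodic point $p$. Here "$\lambda^u_f(m)$" denotes the a.e.-constant value of the unstable exponent (which exists and is constant by ergodicity of volume-preserving $C^{1+\alpha}$ Anosov systems on $\mathbb{T}^3$, as noted in the preliminaries).

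**The plan.** The two-sided comparison is the natural strategy, and it should run as follows. First I would establish the "$\leq$" direction: $\lambda^u_f(m) \le \lambda^u_f(p)$ for every $p\in\mathrm{Per}(f)$. The key input here is Proposition~\ref{prop:lambdaf<lambdaA}, which gives $\int \lambda^u_f\,dVol \le \lambda^u_A$; since $f$ is ergodic the integral equals $\lambda^u_f(m)$, so $\lambda^u_f(m)\le \lambda^u_A$. Now I need $\lambda^u_A \le \lambda^u_f(p)$, i.e. that no periodic orbit has unstable expansion slower than the linear model. For this I would use the semiconjugacy $h\circ f = A\circ h$ together with the quasi-isometry / geometric control from Proposition~\ref{prop:geometric.property}: if a periodic point $p$ of period $k$ had $\lambda^u_f(p) < \lambda^u_A$, then along the unstable leaf through $p$ the $f$-iterates $\tilde f^{nk}$ of a fixed-length unstable segment would grow strictly slower than the corresponding $\tilde A^{nk}$-images of the same endpoints (since $A$ expands the $u$-direction at rate exactly $\lambda^u_A$), forcing the ratio $\|\pi(\tilde f^{nk}(x)-\tilde f^{nk}(y))\| / \|\pi(\tilde A^{nk}(x)-\tilde A^{nk}(y))\|$ to tend to $0$, contradicting the lower bound in Proposition~\ref{prop:geometric.property}. (One must check that the endpoints of the iterated segment stay at distance $>M$, which holds because $A^{nk}$ on the unstable direction blows the distance up; and that the projection $\pi$ is adapted to the unstable direction of $A$, exactly as in the proof of Corollary~\ref{cor:lambdaf<lambdaA}.) This gives $\lambda^u_f(m)\le \lambda^u_A\le \lambda^u_f(p)$.

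**The reverse inequality.** For "$\geq$", I would show $\lambda^u_f(p)\le \lambda^u_A$ for each periodic $p$, again via Proposition~\ref{prop:geometric.property} but with the roles reversed: if some periodic $p$ had $\lambda^u_f(p) > \lambda^u_A$, then the $f$-iterates of an unstable segment through $p$ grow faster than the $A$-iterates of the endpoints, so the same ratio tends to $\infty$, violating the upper bound in Proposition~\ref{prop:geometric.property}. Hence $\lambda^u_f(p)\le\lambda^u_A$ for all periodic $p$. Combined with $\lambda^u_A\le\lambda^u_f(p)$ from the previous step, every periodic point has the \emph{same} unstable exponent, namely $\lambda^u_A$; and since we already have $\lambda^u_f(m)\le\lambda^u_A$, it remains only to argue $\lambda^u_f(m)\ge\lambda^u_A$. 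This follows from Corollary~\ref{cor:expoente.estavel.menor} applied to $f^{-1}$ (whose unstable direction is $E^{ss}\oplus E^{ws}$'s complement... ) — more directly, apply the argument of Proposition~\ref{prop:lambdaf<lambdaA} to $f^{-1}$, or use volume preservation: $\int(\lambda^{ss}_f+\lambda^{ws}_f+\lambda^u_f)\,dVol = 0 = \lambda^{ss}_A+\lambda^{ws}_A+\lambda^u_A$, together with $\int\lambda^u_f\,dVol\le\lambda^u_A$ and the analogous inequalities on the stable side from Corollary~\ref{cor:expoente.estavel.menor}, to force equality $\lambda^u_f(m)=\lambda^u_A$. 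Thus $\lambda^u_f(m)=\lambda^u_A=\lambda^u_f(p)$ for every $p\in\mathrm{Per}(f)$.

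**Main obstacle.** The delicate point is the periodic-orbit estimate: translating "$\lambda^u_f(p)\ne\lambda^u_A$" into a statement about $\|\pi(\tilde f^{nk}(x)-\tilde f^{nk}(y))\|$ requires comparing the \emph{intrinsic} length of an iterated unstable segment (which is governed by the periodic exponent via the subexponential correction in the definition of Lyapunov exponent) with its \emph{extrinsic} diameter after projecting by $\pi$. Quasi-isometry of the unstable foliation (Hammerlindl) is what bridges these, ensuring intrinsic length and ambient distance between endpoints are comparable up to a multiplicative constant; and one must be careful that the subexponential fluctuations in $\|Df^{nk}|E^u\|$ do not spoil the strict exponential gap — they don't, because any fixed $\varepsilon$ smaller than $|\lambda^u_f(p)-\lambda^u_A|$ still yields an exponentially growing or shrinking ratio. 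I expect the author's proof to handle exactly this periodic point with essentially the $\pi$-projection trick of Corollary~\ref{cor:lambdaf<lambdaA}.
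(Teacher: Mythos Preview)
Your proposal has a genuine gap: you have overlooked the standing hypothesis. This lemma sits \emph{inside} the proof of Lemma~\ref{lemma.conjugacy}, whose assumption is precisely that all periodic points share the same Lyapunov exponent in each direction. Without that assumption the statement you are trying to prove is false: for a generic volume-preserving Anosov perturbation of $A$ one can change $\|Df|E^u\|$ near one fixed point while leaving another fixed point untouched, producing distinct values of $\lambda^u_f$ on periodic orbits. So any argument that purports to show $\lambda^u_f(p)=\lambda^u_A$ for \emph{every} periodic $p$ from scratch must fail somewhere.

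The failure in your sketch is the step ``if $\lambda^u_f(p)\neq\lambda^u_A$ then an unstable segment through $p$ grows at the wrong exponential rate''. The quantity $\lambda^u_f(p)$ only governs the growth of an \emph{infinitesimal} vector at $p$; a macroscopic unstable arc through $p$ does not grow at rate $\lambda^u_f(p)$, because under forward iteration almost all of the arc lies far from the orbit of $p$ (the periodic orbit is a repeller inside its own unstable leaf), and there the expansion is governed by other values of $\log\|Df|E^u\|$. Bounded distortion does not save this: the forward images of the arc have unbounded length, so the usual distortion estimate (which needs the \emph{images} to stay in a bounded region) does not apply. The large-scale comparison in Proposition~\ref{prop:geometric.property} therefore gives no information about $\lambda^u_f(p)$ individually. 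This is exactly why Proposition~\ref{prop:lambdaf<lambdaA} is stated for the \emph{integrated} exponent and not pointwise.

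The paper's proof is completely different and much shorter. It takes a volume-typical (hence transitive) point $x$ with well-defined exponent $\lambda^u_f(m)$, uses the Anosov closing lemma to $\delta$-shadow an arbitrarily long recurrent orbit segment of $x$ by a periodic orbit of some point $q$, and then compares the Birkhoff averages of $\log\|Df|E^u\|$ along the two nearby orbit segments via uniform continuity (using that $E^u$ is one-dimensional, so the cocycle is scalar and the average over one period is exactly $\lambda^u_f(q)$). This yields $|\lambda^u_f(m)-\lambda^u_f(q)|<\varepsilon$ for some periodic $q$; letting $\varepsilon\to 0$ gives $\lambda^u_f(m)=\lambda^u_f(q)$ for \emph{some} periodic $q$, and then the standing hypothesis that all periodic exponents coincide finishes the proof. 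The hypothesis is used essentially, not incidentally.
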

\begin{proof} By ergodicity the set of transitive points $\mathcal T$ has total volume. We may assume that all points of $\mathcal T$ have well defined Lyapunov exponents. For $x \in \mathcal T$; given $\varepsilon > 0$ let $\delta >0$ be such that by uniform continuity
$$|\; log||Df|E^u_{y_1}|| - log||Df|E^u_{y_2}|| \;| < \varepsilon, \text{ if } d(y_1,y_2)< \delta.$$

From the Shadowing lemma there is $\alpha$ such that for every $\alpha$-pseudo orbit is $\delta$ shadowed by a real orbit. Given $N_0 \in \mathbb N$ there is $n_0 \in \mathbb N$ and $n_0 > N_0$ such that $\{ \ldots, f^{n_0-1}(x), x, f(x), \ldots, f^{n_0-1}(x), \ldots \} $ is an $\alpha$-pseudo orbit. Since it is a pseudo-periodic orbit it is $\delta$ shadowed by a periodic point with period $n_0$, call this point $q$. Using that $E^u$ is one dimensional, then
$$\left|\; \frac{1}{n_0}log||Df^{n_0}|E^u_{y_1}|| - \frac{1}{n_0}log||Df^{n_0}|E^u_{y_2}|| \;\right| < \varepsilon.$$

Since we already know that $\lambda^u_f(x)$ exists, this implies that $\lambda^u_f(x)=\lambda_f^u(q)$, hence $\lambda^{u}_f(m) = \lambda^{u}_f(p)$ as we wanted.
\end{proof}

\begin{lemma}
 $$\lambda^{u}_f(m) = \lambda^{u}_A.$$
\end{lemma}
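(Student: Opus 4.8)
The goal is to prove that $\lambda^u_f(m) = \lambda^u_A$, where $m$ denotes volume and the Lyapunov exponent $\lambda^u_f(m)$ is the (constant, by ergodicity and the previous lemma) value of the unstable exponent. The previous lemma has just established that $\lambda^u_f$ is constant on periodic points and equals its volume average, so it suffices to pin down that common value.

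My plan is to combine Proposition~\ref{prop:lambdaf<lambdaA} with a matching lower bound. Proposition~\ref{prop:lambdaf<lambdaA} already gives
\[
\lambda^u_f(m) = \int \lambda^u_f \, dVol \leq \lambda^u_A .
\]
For the reverse inequality $\lambda^u_f(m) \geq \lambda^u_A$, I would argue by contradiction exactly as in the proof of Proposition~\ref{prop:lambdaf<lambdaA}, but running the estimate in the opposite direction and using the \emph{upper} bound side of the quasi-isometry/geometric property (Proposition~\ref{prop:geometric.property}). Concretely, if $\lambda^u_f(m) < \lambda^u_A$, then by ergodicity $\lambda^u_f(x) < \alpha < \lambda^u_A$ for a.e. $x$ and some constant $\alpha$; one controls the growth of the length of a center-transverse unstable segment $I$ from above by $e^{n\alpha}$ (after restricting to a positive-volume subset $B_{N_0}$ on which the derivative estimate holds uniformly, and using absolute continuity of $\mathcal F^u_f$ to find such a leaf). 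Then for the endpoints $x,y$ of $I$,
\[
\frac{d(f^n(x), f^n(y))}{d(A^n(x), A^n(y))} \leq cte \, \frac{e^{n\alpha}}{e^{n\lambda^u_A}} \, \frac{1}{d(x,y)} \longrightarrow 0 \text{ as } n \to \infty,
\]
contradicting the lower bound in Proposition~\ref{prop:geometric.property} (which forces this ratio to stay bounded \emph{below} away from zero). Hence $\lambda^u_f(m) \geq \lambda^u_A$, and together with the earlier inequality, equality holds.

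An alternative, and perhaps cleaner, route uses topological entropy: $f$ is semiconjugate (indeed, by the cited results, the semiconjugacy is well-behaved) to its linearization $A$, so they have the same topological entropy, namely $\lambda^u_A = h_{top}(A) = h_{top}(f)$. By the Ruelle inequality, the metric entropy $h_{Vol}(f) \leq \int \lambda^u_f \, dVol = \lambda^u_f(m)$; combined with $h_{Vol}(f) \leq h_{top}(f) = \lambda^u_A$ this does not immediately give the lower bound, so I would instead lean on the variational principle together with the fact that the measure of maximal entropy of $A$ pulls back under the semiconjugacy — but this requires knowing the semiconjugacy does not collapse unstable directions, which is exactly the content already invoked in the proof of item~i). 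Given that subtlety, I expect the contradiction argument via Proposition~\ref{prop:geometric.property} to be the safer and more self-contained choice, and that is the one I would write up.

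The main obstacle is making the length-growth estimate for the unstable segment precise on the correct-measure set: one must be careful that the set $B_{N_0}$ where $\|Df^n|E^u\| \leq e^{n\alpha}$ for all $n \geq N_0$ has positive volume, invoke absolute continuity of $\mathcal F^u_f$ to get positive conditional measure on some unstable leaf, choose the segment $I$ long enough ($l(I) > M$) so that Proposition~\ref{prop:geometric.property} applies to its endpoints along the whole orbit, and verify that the contribution from the bad part of $I$ (outside $B_{N_0}$) cannot rescue the ratio — here one uses that length only shrinks the estimate in the direction we want, so restricting the integral to $I \cap B_{N_0}$ and bounding from above the whole $f^n(I)$ needs the a.e. bound $\lambda^u_f < \alpha$, not just the bound on $B_{N_0}$. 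This is precisely the mirror image of the bookkeeping in Proposition~\ref{prop:lambdaf<lambdaA}, so no genuinely new idea is needed, only careful adaptation.
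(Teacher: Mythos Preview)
Your geometric approach for the lower bound $\lambda^u_f(m)\geq\lambda^u_A$ has a genuine gap, and the asymmetry with Proposition~\ref{prop:lambdaf<lambdaA} is not cosmetic. In that proposition one bounds $l(f^n(I))$ \emph{from below} by integrating only over $I\cap B_{N_0}$, which is legitimate because the integrand is nonnegative. To run the mirror argument you must bound $l(f^n(I))=\int_I\|Df^n|E^u_x\|\,dVol^u(x)$ \emph{from above}, and this requires control of $\|Df^n|E^u_x\|$ on \emph{all} of $I$, not on a positive-measure subset. The a.e.\ statement $\lambda^u_f(x)<\alpha$ only says that for each $x$ there is some $N_x$ beyond which the bound holds; on the complement $I\setminus B_{N_0}$ (which has positive length no matter how large $N_0$ is) the derivative can be as large as $e^{n\mu}$ with $\mu>\lambda^u_A$, and that term dominates. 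So the ratio you write need not tend to $0$, and Proposition~\ref{prop:geometric.property} gives no contradiction. In fact the inequality $\int\lambda^u_f\,dVol\geq\lambda^u_A$ is \emph{false} in general for volume-preserving DA maps (strict inequality in Proposition~\ref{prop:lambdaf<lambdaA} is exactly what happens when volume is not the measure of maximal entropy), so no argument that ignores the periodic-point hypothesis can succeed here.

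The paper's proof uses that hypothesis in an essential way, and is much closer to the entropy route you sketched and discarded. Since $\lambda^u_f(p)$ is constant on periodic orbits, the potential $-\log\|Df|E^u\|$ has constant periodic averages, hence (Liv\v{s}ic / equilibrium-state theory, \cite{bowen-book}) is cohomologous to a constant. Therefore its equilibrium state, the SRB measure (here volume), coincides with the equilibrium state for the zero potential, the measure of maximal entropy. This gives $h_{Vol}(f)=h_{top}(f)$, and now the chain
\[
\lambda^u_A = h_{top}(A)=h_{top}(f)=h_{Vol}(f)=\int\lambda^u_f\,dVol=\lambda^u_f(m)
\]
closes up, the last equality being Pesin's entropy formula. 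The step you were missing is precisely that the periodic-data hypothesis forces volume to be the measure of maximal entropy; once you have that, Ruelle's inequality becomes Pesin's equality and the variational principle gives the result directly. If you want to salvage a geometric proof, you would still need Liv\v{s}ic to turn the periodic-point constancy into the uniform bound $\|Df^n|E^u_x\|\asymp e^{n\lambda^u_f(m)}$, after which Proposition~\ref{prop:geometric.property} does give both inequalities; but at that point you have essentially reproduced the thermodynamic argument by hand.
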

\begin{proof}
 We know that the topological entropy of $A$ is $\lambda_A^{u}$, the conjugacy gives $h_{top}(f)=h_{top}(A)$. From the theory of equilibrium states (\cite{bowen-book}) the measure of maximal entropy is given by the potential $\psi = 0$ and the equilibrium state for the potential $\psi = - log\lambda^{u}$ gives the SRB measure, which is $m$ in our case. And to see that both equilibrium states are the same we just need to see that both potential are cohomologous (\cite{bowen-book}). It means that both measures coincide if, and only if, $$\frac{1}{n}\sum_{i=1}^n(-log ||Df_{f^i(x)}{|E^u}||) = cte, \; \forall x \text{ such that } f^n(x)=x.$$

Which is true by hypothesis.

Finally Pesin's formula gives that $h_f(m)=\int \lambda^{u}_f dm = \lambda_f^{u}$. Let us put all this equalities below.

$$\lambda^{u}_A = h_{top}(A) = h_{top}(f)=h_f(m)=\int \lambda^{u}_f dm=\lambda^{u}_f(p).$$
The lemma is then proved.

\end{proof}
 
 \begin{lemma}
 $$\lambda_f^{ws}(p)=\lambda^{ws}_A$$
\end{lemma}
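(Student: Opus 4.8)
Proof proposal for the final lemma ($\lambda_f^{ws}(p) = \lambda_A^{ws}$):

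\emph{Proof plan.} The idea is to identify the common value $\lambda^{ws}_f(p)$ by trapping the corresponding integrated exponent between two inequalities — one coming from the linearization bound of Proposition~\ref{prop:lambdaf<lambdaA} and one from absolute continuity of the center foliation via Corollary~\ref{cor:lambdaf<lambdaA} — and then closing the gap with the volume-preservation identity that the three directional exponents sum to zero.

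First I would record the elementary reductions. Since $f$ is a $C^{1+\alpha}$ volume preserving Anosov diffeomorphism of $\mathbb T^3$ it is ergodic, so for Lebesgue-a.e.\ $x$ the numbers $\lambda^{ss}_f(x),\lambda^{ws}_f(x),\lambda^u_f(x)$ equal $\int\lambda^{ss}_f\,dVol,\ \int\lambda^{ws}_f\,dVol,\ \int\lambda^u_f\,dVol$. Each of $E^{ss},E^{ws},E^u$ is one-dimensional, so the shadowing argument already used in the previous lemma applies verbatim in each direction and shows that the (by hypothesis constant) periodic value in each direction coincides with this a.e.\ value; in particular $\lambda^{ws}_f(p)=\int\lambda^{ws}_f\,dVol$. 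Because $f$ preserves volume and $A\in SL(3,\mathbb Z)$, we get $\int(\lambda^{ss}_f+\lambda^{ws}_f+\lambda^u_f)\,dVol=\int\log|\det Df|\,dVol=0=\lambda^{ss}_A+\lambda^{ws}_A+\lambda^u_A$, and by the preceding lemma $\int\lambda^u_f\,dVol=\lambda^u_A$; hence $\int\lambda^{ss}_f\,dVol+\int\lambda^{ws}_f\,dVol=\lambda^{ss}_A+\lambda^{ws}_A$.

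Next I would establish the two inequalities $\int\lambda^{ss}_f\,dVol\ge\lambda^{ss}_A$ and $\int\lambda^{ws}_f\,dVol\ge\lambda^{ws}_A$. The first follows by applying Proposition~\ref{prop:lambdaf<lambdaA} to $f^{-1}$: the map $f^{-1}$ is partially hyperbolic with one-dimensional unstable bundle $E^{ss}_f$ and linearization $A^{-1}$, so $-\int\lambda^{ss}_f\,dVol=\int\lambda^u_{f^{-1}}\,dVol\le\lambda^u_{A^{-1}}=-\lambda^{ss}_A$. For the second inequality I first need that the weak-stable foliation $\mathcal F^{ws}_f$, which is exactly $\mathcal F^c_f$, is absolutely continuous; to see this, note $\mathcal F^c_f=\mathcal F^c_{f^{-1}}$, and for $f^{-1}$ the relevant splitting is $E^u_f\oplus E^{ws}_f\oplus E^{ss}_f$ with strong-unstable direction $E^{ss}_f$, along which $\lambda^{uu}_{f^{-1}}=-\lambda^{ss}_f$ is constant on periodic points by hypothesis. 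Gogolev's Theorem~\ref{theo:gogolev}, applied to $f^{-1}$, then gives absolute continuity of $\mathcal F^c_{f^{-1}}=\mathcal F^{ws}_f$, and Corollary~\ref{cor:lambdaf<lambdaA} yields $\lambda^{ws}_f\ge\lambda^{ws}_A$.

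Finally, adding the two inequalities gives $\int\lambda^{ss}_f\,dVol+\int\lambda^{ws}_f\,dVol\ge\lambda^{ss}_A+\lambda^{ws}_A$, which together with the identity from the second paragraph forces both inequalities to be equalities; in particular $\lambda^{ws}_f(p)=\int\lambda^{ws}_f\,dVol=\lambda^{ws}_A$, as claimed. The step that requires the most care is the bookkeeping for the inverse map: checking that $f^{-1}$ is genuinely partially hyperbolic (and Anosov) with the bundles in the order needed to invoke both Proposition~\ref{prop:lambdaf<lambdaA} and Theorem~\ref{theo:gogolev}, and that $\mathcal F^c_{f^{-1}}$ really coincides with $\mathcal F^{ws}_f$ so that absolute continuity transfers; once those identifications are made, the conclusion is a short combination of the two known inequalities with the zero-sum constraint.
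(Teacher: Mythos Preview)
Your proposal is correct and follows essentially the same route as the paper: combine the zero-sum identity $\lambda^{ss}_f+\lambda^{ws}_f+\lambda^u_f=\lambda^{ss}_A+\lambda^{ws}_A+\lambda^u_A$ with the already established equality $\lambda^u_f=\lambda^u_A$, the inequality $\lambda^{ss}_f\ge\lambda^{ss}_A$ (the paper quotes Corollary~\ref{cor:expoente.estavel.menor}; you equivalently apply Proposition~\ref{prop:lambdaf<lambdaA} to $f^{-1}$), and the inequality $\lambda^{ws}_f\ge\lambda^{ws}_A$ from Corollary~\ref{cor:lambdaf<lambdaA}, to force equality in both. One point where you are actually more careful than the paper: Corollary~\ref{cor:lambdaf<lambdaA} has the hypothesis that $\mathcal F^{ws}$ is absolutely continuous, which the paper invokes without comment, while you supply the justification via Theorem~\ref{theo:gogolev} applied to $f^{-1}$ using the constancy of $\lambda^{ss}_f$ on periodic points.
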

\begin{proof}

 By the above lemma we already know that $\lambda_f^{u}(p)=\lambda^{u}_A$; and $\lambda_f^{ss}(p) \geq \lambda^{ss}_A$ by Corollary \ref{cor:expoente.estavel.menor}. Hence, since we are on the volume preserving case $\lambda^{ss}_f + \lambda^{ws}_f +\lambda^{u}_f = \lambda^{ss}_A + \lambda^{ws}_A +\lambda^{u}_A$,  therefore we just need to see that $\lambda_f^{ws}(p) \geq \lambda^{ws}_A$ which is the Corollary \ref{cor:lambdaf<lambdaA}. 
\end{proof}

The above lemmas imply,
$$\lambda^*_f(p)=\lambda^*_A(h(p)), \; \forall p \in Per(f).$$

The above equality gives what is known as periodic data, hence by Gogolev, Guysinsky \cite{gogolev.guysinsky-c1.conjugacy} $f$ is $C^1$ conjugate to the linear one.
\hfill $Box$

\subsection{Proof of Theorem \ref{theo.abs.cont.not.c1}}

We start from a linear Anosov with splitting $TM = E^{ss} \oplus E^{ws} \oplus E^{u}$. Let $\phi$ be a volume preserving diffeomorphism which preserves the $E^{ss}$ direction. This means it is absolutely continuous by Gogolev \cite{gogolev-non.abs.cont} and by Lemma \ref{lemma.conjugacy} it is not $C^1$ conjugate as we have changed the exponents.
\hfill $\Box$

\section{Rigidity}\label{sec.rigidity}

The goal of this subsection is to prove Theorem \ref{theo.rigidity}. But first we construct some \textit{Conditional measures with dynamical meaning}. We shall associate to each center leaf a class of measures differing from each other by a multiplication of a positive real number in such a way that on each foliated box the normalized element of this class will give the Rokhlin disintegration of the measure. When the foliation satisfies the hypothesis on Theorem \ref{theo.rigidity} we shall be able to pick measurably on each leaf a representative with some dynamical meaning, it will then help us to obtain some information on the center Lyapunov exponent of $f$.

\begin{lemma}[Avila, Viana, Wilkinson \cite{AVW}]\label{lemma:glue.measures.AVW}
 For any foliation boxes $\mathcal B$, $\mathcal B'$ and m-almost every $x \in \mathcal B \cap \mathcal B'$ the restriction of $m_x^{\mathcal B}$ and $m_x^{\mathcal B'}$ to  $\mathcal B \cap \mathcal B'$ coincide up to a constant factor.
\end{lemma}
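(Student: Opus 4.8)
The plan is to work in the overlap $\mathcal B \cap \mathcal B'$ and compare the two Rokhlin disintegrations obtained from the two foliation boxes by exploiting the essential uniqueness of conditional measures. First I would recall that in each box $\mathcal B$ the center foliation restricts to a measurable partition (into the connected plaques $\mathcal F^c_x \cap \mathcal B$), so Rokhlin's theorem (Theorem \ref{theo:rokhlin}) produces a system of conditional probabilities $\{m^{\mathcal B}_x\}$, supported on the plaque through $x$, and similarly $\{m^{\mathcal B'}_x\}$ for $\mathcal B'$. These are each unique up to a set of leaves of zero $\widetilde m$-measure. The key point is that $m^{\mathcal B}_x$, being a probability on the plaque $\mathcal F^c_x \cap \mathcal B$, restricts (after normalization, on the set where the restriction is nonzero) to a conditional measure for $m$ relative to the finer partition into sets $\mathcal F^c_x \cap \mathcal B \cap \mathcal B'$; the same is true of $m^{\mathcal B'}_x$.

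The main step is therefore the following disintegration-refinement argument. Consider the partition $\mathcal Q$ of $\mathcal B \cap \mathcal B'$ into the connected components of $\mathcal F^c_x \cap \mathcal B \cap \mathcal B'$; this is a common coarsening-refinement of the two plaque partitions in the sense that each $\mathcal Q$-atom is simultaneously contained in a $\mathcal B$-plaque and a $\mathcal B'$-plaque. Disintegrating $m|_{\mathcal B\cap \mathcal B'}$ directly with respect to $\mathcal Q$ gives conditional probabilities $\{\nu_x\}$, unique a.e. On the other hand, for $m$-a.e.\ $x$ the normalized restriction
$$
\frac{m^{\mathcal B}_x|_{\mathcal B\cap\mathcal B'}}{m^{\mathcal B}_x(\mathcal B\cap\mathcal B')}
$$
also satisfies properties i)--iii) of a system of conditional measures for $m|_{\mathcal B\cap\mathcal B'}$ with respect to $\mathcal Q$ — this is a routine check using transitivity of disintegrations (the disintegration of $m$ with respect to the $\mathcal B$-plaques, further disintegrated with respect to $\mathcal Q$, recovers the disintegration with respect to $\mathcal Q$). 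By uniqueness this normalized restriction equals $\nu_x$ for $m$-a.e.\ $x$. The identical argument applied to $\mathcal B'$ gives that the normalized restriction of $m^{\mathcal B'}_x$ also equals $\nu_x$ a.e. Hence for $m$-a.e.\ $x$,
$$
\frac{m^{\mathcal B}_x|_{\mathcal B\cap\mathcal B'}}{m^{\mathcal B}_x(\mathcal B\cap\mathcal B')}
= \nu_x =
\frac{m^{\mathcal B'}_x|_{\mathcal B\cap\mathcal B'}}{m^{\mathcal B'}_x(\mathcal B\cap\mathcal B')},
$$
so $m^{\mathcal B}_x$ and $m^{\mathcal B'}_x$ agree on $\mathcal B\cap\mathcal B'$ up to the positive constant $m^{\mathcal B}_x(\mathcal B\cap\mathcal B')/m^{\mathcal B'}_x(\mathcal B\cap\mathcal B')$, which is what we want.

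I expect the main obstacle to be the careful bookkeeping around null sets and measurability: one must ensure that the set of $x$ for which the restriction $m^{\mathcal B}_x(\mathcal B\cap\mathcal B')$ is strictly positive has full $m$-measure in $\mathcal B\cap\mathcal B'$ (which follows because $m(\mathcal B\cap\mathcal B')>0$ and the conditionals integrate back to $m$), and that the various "unique up to null sets" clauses can be intersected into a single full-measure set. A secondary technical point is verifying the transitivity-of-disintegration fact in the precise form needed; this is standard (it amounts to checking i)--iii) for the composed family), but should be stated explicitly since it is the engine of the proof. Everything else is a direct application of Rokhlin uniqueness.
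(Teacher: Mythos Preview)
Your proposal is correct and is essentially the paper's own argument: both restrict to $\mathcal C = \mathcal B \cap \mathcal B'$, disintegrate $m|_{\mathcal C}$ with respect to the plaque partition there, and invoke essential uniqueness of the Rokhlin conditionals to conclude that the two families agree up to a scalar. The only cosmetic difference is that the paper realizes the quotient measures concretely via projections onto transversals $\Sigma,\Sigma'$ and thereby expresses the constant as a Radon--Nikodym ratio $a(\xi) = \frac{d\mu_{\mathcal B'}}{d\mu'_{\mathcal C}}(\xi)\bigl(\frac{d\mu_{\mathcal B}}{d\mu_{\mathcal C}}(\xi)\bigr)^{-1}$, which is exactly your $m_x^{\mathcal B}(\mathcal C)/m_x^{\mathcal B'}(\mathcal C)$.
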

\begin{proof}

 Let $\mu _{\mathcal B}$ be the measure on $\Sigma$ obtained as the projection of $m | \mathcal B$ along local leaves. Consider any $\mathcal C \subset \mathcal B$ and let $\mu_{\mathcal C}$ be the projection of $m | \mathcal C$ on $\Sigma$,
\begin{eqnarray*}
 \frac{d \mu_{\mathcal C}}{d \mu_{\mathcal B}} \in (0,1], \; \nu_{\mathcal C} \text{ almost every point}.
\end{eqnarray*}

For any measurable set $E \subset \mathcal C$
$$m(E)= \int_\Sigma m_\xi ^{\mathcal B}(E) \; d\mu_{\mathcal B}(\xi) = \int_\Sigma m_{\xi}^{\mathcal B}(E)\frac{d \mu_{\mathcal B}}{d \mu_{\mathcal C}}(\xi) \; d\mu_{\mathcal C}(\xi).$$

By essential uniqueness, this proves that the disintegration of $m|\mathcal C$ is given by
$$m_\xi ^{\mathcal C} = \frac{d\mu_{\mathcal B}}{d\mu_{\mathcal C}}(\xi) \; m_\xi ^{\mathcal B}; \; \mu_{\mathcal C}(\xi) \text{ almost every point}.$$ 

Take $\mathcal C = \mathcal B \cap \mathcal B'$. Therefore $\frac{d\mu_{\mathcal B}}{d\mu_{\mathcal C}}(\xi) m_\xi ^{\mathcal B} |\mathcal C =  m_\xi ^{\mathcal C} = \frac{d\mu_{\mathcal B'}}{d\mu'_{\mathcal C}}(\xi) m_\xi ^{\mathcal B'} |\mathcal C $. 
Where $\mu'_{\mathcal C}$ is the projection of measure $\mu$ on the transversal $\Sigma '$ relative to the $\mathcal B'$ box. Hence

 $$m_\xi ^{\mathcal B} |\mathcal C = a(\xi) m_\xi ^{\mathcal B'} |\mathcal C,$$ 
where $a(\xi)= \frac{d\mu_{\mathcal B'}}{d\mu'_{\mathcal C}}(\xi) (\frac{d\mu_{\mathcal B}}{d\mu_{\mathcal C}}(\xi))^{-1}.$
\end{proof}

The above lemma implies the existence of a family $\{[m_x] \; | \; x \in M \}$ of measures defined up to scaling 
and satisfying $m_x(M \backslash \mathcal F_x) =0$. The map $x \mapsto [m_x]$ is constant on leaves of $\mathcal F$ and the conditional probabilities $m_x ^{\mathcal B}$
 coincide almost everywhere with the normalized restrictions of $[m_x]$.

We observe that disintegration of a measure is an almost everywhere concept, but in our case, since we shall be considering a $C^1$ center foliation, we look to the conditional measures, of volume, defined everywhere. And, more important, the number $a(\xi)= \frac{d\mu_{\mathcal B'}}{d\mu'_{\mathcal C}}(\xi) (\frac{d\mu_{\mathcal B}}{d\mu_{\mathcal C}}(\xi))^{-1}$ is indeed defined everywhere.

From now on we work on the lift. Let $B:= \mathcal W^{su}(0)$ which is the saturation by unstable leaves of the stable manifold of $0 \in \mathbb R^3$. By the semi-conjugacy we know that every segment of center leaf which has size large enough keep increasing by forward iteration. Let $\gamma_0$ be a length with this property. Let $B_0$ be the two-dimensional topological surface such that each center leaf intersects $B$ and $B_0$ on two points, that are on the same center leaf and at a distance $\gamma_0$ inside the center leaf. Let $B_k := f^k(B_0)$. Therefore, for each point $\xi \in B$ there is a unique point $q_k(\xi)\in B_k$ that is on the same center leaf as $\xi$. Since it will be clear to which point $\xi$ $q_k(\xi)$ is associate, we use $q_k$ instead to simplify notation.

Define the measure $m_{\xi,k}$ by $$m_{\xi,k}([0,q_k])=\lambda^k,$$ where $\lambda$ is the center eigenvalue of the linearization,  $[0,q_k]$ means the segment $[\xi,q_k(\xi)]$ inside the center leaf of $\xi$.
 
\begin{lemma}
 $$f_*m_{x,k} = \lambda^{-1} m_{f(x),k+1}.$$
\end{lemma}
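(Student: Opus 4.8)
The plan is to unwind the definitions and use the scaling relation for the family $[m_x]$ together with the way $f$ maps center segments to center segments. The measure $m_{\xi,k}$ is characterized (up to its class) by the normalization $m_{\xi,k}([0,q_k(\xi)])=\lambda^k$, where $q_k(\xi)\in B_k$ is the unique point on $\mathcal F^c_\xi$ at the prescribed position. The key observation is that $f$ carries the reference surface $B_k$ to $B_{k+1}$, since $B_{k+1}=f^{k+1}(B_0)=f(B_k)$, and it carries $B=\mathcal W^{su}(0)$ to itself because $0$ is fixed and $f$ permutes stable and unstable leaves. Hence $f$ maps the center leaf $\mathcal F^c_\xi$ to $\mathcal F^c_{f(\xi)}$, taking the basepoint $\xi$ (really the intersection with $B$) to the basepoint of $\mathcal F^c_{f(\xi)}$ and taking $q_k(\xi)$ to $q_{k+1}(f(\xi))$. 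Therefore $f$ sends the distinguished segment $[0,q_k(\xi)]$ on $\mathcal F^c_\xi$ exactly onto the distinguished segment $[0,q_{k+1}(f(\xi))]$ on $\mathcal F^c_{f(\xi)}$.

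Next I would compare $f_*m_{\xi,k}$ with $m_{f(\xi),k+1}$. Both are (non-normalized) measures supported on $\mathcal F^c_{f(\xi)}$, and by the essential uniqueness of disintegration encoded in Lemma \ref{lemma:glue.measures.AVW} — more precisely, by the fact that the local conditional measures of volume on center leaves are well-defined up to a scalar and $f$ preserves volume — the pushforward $f_*m_{\xi,k}$ lies in the same scaling class as $m_{f(\xi),k+1}$. So it suffices to pin down the scalar by evaluating both measures on the reference segment $[0,q_{k+1}(f(\xi))]$. On one hand, $m_{f(\xi),k+1}([0,q_{k+1}(f(\xi))]) = \lambda^{k+1}$ by definition. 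On the other hand, using the previous paragraph,
$$
f_*m_{\xi,k}\bigl([0,q_{k+1}(f(\xi))]\bigr) = m_{\xi,k}\bigl(f^{-1}[0,q_{k+1}(f(\xi))]\bigr) = m_{\xi,k}\bigl([0,q_k(\xi)]\bigr) = \lambda^k.
$$
Comparing, $f_*m_{\xi,k} = \lambda^{k}/\lambda^{k+1}\cdot m_{f(\xi),k+1} = \lambda^{-1} m_{f(\xi),k+1}$, which is the claim.

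The step I expect to be the main obstacle is justifying rigorously that $f$ really does send $[0,q_k(\xi)]$ onto $[0,q_{k+1}(f(\xi))]$ — that is, that $f$ respects the basepoint structure coming from $B=\mathcal W^{su}(0)$ and from the surfaces $B_k$, and in particular that $f(q_k(\xi))=q_{k+1}(f(\xi))$. This requires knowing that $\mathcal W^{su}(0)$ is $f$-invariant (clear, since $f(0)=0$ and $f$ preserves the stable and unstable foliations), that the foliation boxes and the intersections with $B$ and $B_k$ are compatible with the dynamics, and that the "unique intersection point on the same center leaf" is transported correctly by $f$; this is where the semi-conjugacy and the fact that large center segments grow under iteration (used to define $\gamma_0$) enter, guaranteeing $q_k$ is well-defined and its image behaves as claimed. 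Once that geometric bookkeeping is in place, the measure identity follows from the scaling-class uniqueness and the single evaluation above.
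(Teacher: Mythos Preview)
Your argument is correct and follows the same route as the paper: both proofs compare $f_*m_{x,k}$ and $m_{f(x),k+1}$ by evaluating them on the reference segment $[0,q_{k+1}]$, using that $f$ carries $[0,q_k(\xi)]$ onto $[0,q_{k+1}(f(\xi))]$ since $B$ is $f$-invariant and $B_{k+1}=f(B_k)$. The paper's proof is the one-line version of exactly your computation; your added remark that the two measures lie in the same scaling class (so a single evaluation pins down the scalar) makes explicit what the paper leaves implicit.
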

\begin{proof}
 Just see that  $$f_*m_{x,k}([0,q_{k+1}]) = \lambda^{-1} m_{f(x),k+1}([0,q_{k+1}]).$$
\end{proof}

Therefore if the sequence $m_{x,k}$ converges we would get $$f_*m_x = \lambda^{-1} m_{f(x)}.$$

In general, by Lemma \ref{lemma:glue.measures.AVW}, for two foliated boxes $\mathcal B$ and $\mathcal B'$ we have $$m_x^{\mathcal B} \frac{d \nu_{\mathcal B}}{d \nu_{\mathcal C}} = m_x^{\mathcal B'} \frac{d \nu_{\mathcal B'}}{d \nu'_{\mathcal C}} .$$

We apply this formula to the following boxes: $\mathcal B$ and $\mathcal B_k$, where $\mathcal B$ comprehend the segment of center leaves between $B$ and $B_0$, similarly $\mathcal B_k$ is formed by the segment of center leaves bounded by $B$ and $B_k$. Then

$$ m_x^{\mathcal B} . 1 = \frac{d \mu_{\mathcal B _k}}{d \mu_{\mathcal B}} m_x^{\mathcal B _k} = \frac{d \mu_{\mathcal B _k}}{d \mu_{\mathcal B}} \lambda^{-k}m_{x,k}.$$

Note that $\lambda^{k}m_{x,k} = m_x^{\mathcal B _k}$ by the definition of the disintegration. The above proves

\begin{lemma}\label{lemma:m_xk}
On $\mathcal B$:

$$m_{x,k} = (\frac{d \mu_{\mathcal B _k}}{d \mu_{\mathcal B }})^{-1} \lambda^k m_x^{\mathcal B }.$$ 
\end{lemma}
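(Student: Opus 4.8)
The statement is essentially an identification of two disintegrations of volume on the same foliated box $\mathcal{B}$, reconciled through the scaling normalization $m_{x,k}([0,q_k]) = \lambda^k$. My plan is to unwind the definitions carefully. First I would fix the two foliated boxes: $\mathcal{B}$, whose local leaves are the center segments running from $B$ to $B_0$, and $\mathcal{B}_k$, whose local leaves are the longer center segments from $B$ to $B_k = f^k(B_0)$; both are saturated by the same family of center leaves and both have $B$ as a common transversal, so the projections $\mu_{\mathcal{B}}$ and $\mu_{\mathcal{B}_k}$ live on the same transversal and the Radon–Nikodym derivative $\frac{d\mu_{\mathcal{B}_k}}{d\mu_{\mathcal{B}}}$ makes sense. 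The first key step is to invoke Lemma \ref{lemma:glue.measures.AVW} (in the form displayed just before the statement) with $\mathcal{C} = \mathcal{B}$, which is a sub-box of $\mathcal{B}_k$: this gives $m_x^{\mathcal{B}} = \frac{d\mu_{\mathcal{B}_k}}{d\mu_{\mathcal{B}}}\, m_x^{\mathcal{B}_k}$ restricted to $\mathcal{B}$, where $m_x^{\mathcal{B}}$ and $m_x^{\mathcal{B}_k}$ are the Rokhlin conditional probabilities.

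The second key step is to relate $m_x^{\mathcal{B}_k}$ to the normalized measure $m_{x,k}$. By construction $m_{x,k}$ is the (unnormalized) representative of the class $[m_x]$ pinned down by $m_{x,k}([0,q_k]) = \lambda^k$; since the full local leaf of $\mathcal{B}_k$ through $x$ is exactly the segment $[0, q_k(\xi)]$ (that is where $B_k$ meets the leaf), we get $m_{x,k}(\mathcal{B}_k\text{-leaf}) = \lambda^k$, hence the normalized restriction of $m_{x,k}$ to $\mathcal{B}_k$ is $\lambda^{-k} m_{x,k}$, and by the "essential uniqueness" of disintegrations (and the fact, noted in the text after Lemma \ref{lemma:glue.measures.AVW}, that the $m_x^{\mathcal{B}}$ coincide a.e.\ with normalized restrictions of $[m_x]$) this equals $m_x^{\mathcal{B}_k}$. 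So $m_x^{\mathcal{B}_k} = \lambda^{-k} m_{x,k}$ on $\mathcal{B}_k$, hence in particular on $\mathcal{B}$. Substituting into the formula from the first step yields $m_x^{\mathcal{B}} = \frac{d\mu_{\mathcal{B}_k}}{d\mu_{\mathcal{B}}}\,\lambda^{-k} m_{x,k}$ on $\mathcal{B}$, which rearranges to the claimed identity $m_{x,k} = \left(\frac{d\mu_{\mathcal{B}_k}}{d\mu_{\mathcal{B}}}\right)^{-1}\lambda^k m_x^{\mathcal{B}}$.

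The only genuinely delicate points — and where I would spend the care — are bookkeeping ones rather than conceptual ones: checking that $\frac{d\mu_{\mathcal{B}_k}}{d\mu_{\mathcal{B}}}$ is strictly positive (so that its inverse is defined), which follows because $\mathcal{B}$ is a full-measure sub-box of $\mathcal{B}_k$ in the leaf direction only in part, so one should really argue with $\mu_{\mathcal{B}}$ and $\mu_{\mathcal{B}_k}$ as mutually absolutely continuous measures on the common transversal $B$ (both are $\pi_*$ of $m$ restricted to boxes with the same transversal, hence equivalent); and justifying that the equalities, which Lemma \ref{lemma:glue.measures.AVW} gives only $\mu$-almost everywhere, can be read pointwise on the center leaf because we are in the $C^1$-foliation setting where, as remarked in the text, the conditional measures and the factor $a(\xi)$ are defined everywhere. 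I expect no real obstacle here; the lemma is bookkeeping that repackages Lemma \ref{lemma:glue.measures.AVW} together with the normalization of $m_{x,k}$, and its role is simply to make the dynamical scaling $f_* m_{x,k} = \lambda^{-1} m_{f(x),k+1}$ interact cleanly with the Rokhlin disintegration when we later pass to the limit $k\to\infty$.
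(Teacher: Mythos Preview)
Your proposal is correct and follows essentially the same route as the paper: the paper also applies Lemma~\ref{lemma:glue.measures.AVW} with $\mathcal C = \mathcal B \subset \mathcal B_k$ to get $m_x^{\mathcal B} = \frac{d\mu_{\mathcal B_k}}{d\mu_{\mathcal B}}\, m_x^{\mathcal B_k}$, then uses $m_x^{\mathcal B_k} = \lambda^{-k} m_{x,k}$ from the normalization, and rearranges. Your additional remarks on positivity of the Radon--Nikodym derivative and on upgrading the a.e.\ statements to pointwise ones in the $C^1$ setting go slightly beyond what the paper writes out, but they are consistent with the comments made in the text.
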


%We want to see that $(\frac{d \mu_{\mathcal B _k}}{d \mu_{\mathcal B _k}})^{-1} \lambda^k$ converges as $k \rightarrow \infty$
To establish the convergence of the measures we shall need

\begin{lemma} If $\mathcal F^c$ satisfies the hypothesis of Theorem \ref{theo.rigidity} then, there is a uniform constant $\alpha$ such that
 $$\frac{1}{\alpha} \frac{l(\mathcal F^c_x \cap \mathcal B_k)}{{l(\mathcal F^c_x \cap \mathcal B)}} \leq \frac{d \mu_{B_k}}{d \mu_{\mathcal B}}(x) \leq \alpha \frac{l(\mathcal F^c_x \cap \mathcal B_k)}{{l(\mathcal F^c_x \cap \mathcal B)}}.$$
\end{lemma}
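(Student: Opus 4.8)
The plan is to compare the Radon–Nikodym derivative $\frac{d\mu_{\mathcal B_k}}{d\mu_{\mathcal B}}(x)$ — which measures how much the transverse measure (projection of volume along center leaves) is distorted when we pass from the thin box $\mathcal B$ to the thick box $\mathcal B_k$ — with the purely geometric ratio of center-leaf lengths $l(\mathcal F^c_x\cap\mathcal B_k)/l(\mathcal F^c_x\cap\mathcal B)$. The point is that both quantities describe, in slightly different senses, ``how much more center-leaf volume sits over the transversal point $x$ in $\mathcal B_k$ than in $\mathcal B$''. First I would use the $C^1$-ness of the center foliation: locally, in a foliated chart adapted to $\mathcal F^c$, the volume $m$ disintegrates with conditional densities on each center leaf that are uniformly bounded above and below (the Jacobian of the $C^1$ change of coordinates straightening the foliation is continuous on a compact piece, hence bounded). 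Concretely, $m_x^{\mathcal B}$ is Lebesgue-equivalent to arclength $l$ on $\mathcal F^c_x\cap\mathcal B$ with density pinched between two positive constants, and similarly for $m_x^{\mathcal B_k}$ restricted to each sub-box; this gives $\frac{d\mu_{\mathcal B_k}}{d\mu_{\mathcal B}}(x)\asymp \frac{m_x^{\mathcal B_k}(\mathcal F^c_x\cap\mathcal B_k)\cdot m_x^{\mathcal B}(\mathcal F^c_x\cap\mathcal B)}{\cdots}$ — more precisely, after normalizing, the ratio of total conditional masses equals the ratio of arclengths up to the density bounds.

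The subtlety, and the reason the hypothesis of Theorem \ref{theo.rigidity} is invoked rather than just $C^1$-ness alone, is that $\mathcal B_k = f^k(\mathcal B_0)$ grows with $k$, so a priori the implied constant could blow up as $k\to\infty$: the $C^1$ bounds are only local/compact, while $\mathcal B_k$ spreads over larger and larger pieces of each center-unstable and center-stable leaf. This is where the \emph{uniform boundedness of the center holonomies inside $\mathcal F^{cu}_f$ and $\mathcal F^{cs}_f$} enters. The holonomy along center leaves between the transversal $B$ (resp.\ $B_0$) and $B_k$, computed inside a $cu$- or $cs$-leaf, has Jacobian controlled uniformly in $k$ by hypothesis; this is exactly what pins the density-distortion constant $\alpha$ independently of $k$. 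So the key step is: write $\frac{d\mu_{\mathcal B_k}}{d\mu_{\mathcal B}}$ as (transverse Jacobian of the center holonomy from $B$ to $B_k$) times (correction coming from the $C^1$ conditional densities), and bound each factor uniformly — the holonomy factor by the Theorem's hypothesis, the density factor by compactness and $C^1$-ness.

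In more detail, I would fix a finite cover of the (compact) relevant region of $\mathbb{T}^3$ by foliated $C^1$ charts, get a single constant $\alpha_0$ controlling all the conditional densities of $m$ with respect to arclength in these charts, then observe that both $\mathcal B$ and each slice of $\mathcal B_k$ decompose into finitely many such chart-pieces (the number of pieces does grow with $k$, but the telescoping product of holonomy Jacobians across adjacent pieces is exactly the total center-holonomy Jacobian from $B$ to $B_k$, which is uniformly bounded by hypothesis, so there is no accumulation of error). Putting $\alpha := \alpha_0^2 \cdot \sup_k(\text{holonomy Jacobian bound})$ yields the stated two-sided inequality. The main obstacle is making this telescoping rigorous — i.e.\ checking that the finitely-many local density distortions do not compound uncontrollably as $k$ grows — and it is precisely resolved by the uniform holonomy bound, which converts a product of local estimates into one global estimate.

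Hence the lemma follows. $\hfill\Box$
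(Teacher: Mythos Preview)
Your approach is correct and matches the paper's, which argues in three lines that the volume of a thin tube over a small transversal region is ``height times base'' (center-leaf length times transversal area) up to a constant controlled precisely by the uniform bound on the center holonomies inside $\mathcal F^{cu}$ and $\mathcal F^{cs}$. Your telescoping concern is unnecessary overhead --- the holonomy hypothesis in Theorem~\ref{theo.rigidity} is already a global bound on the total holonomy from $B$ to $B_k$, so the density distortion along the whole center segment is controlled directly by that single bound and no decomposition into chart-pieces is needed --- but this does not affect correctness.
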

\begin{proof}
 To calculate $\frac{l(\mathcal F^c_x \cap \mathcal B_k)}{{l(\mathcal F^c_x \cap \mathcal B)}}$ we need to estimate the volume of a rectangular box. The center holonomy on the center unstable and center stable foliation are bounded by hypothesis. Therefore the volume can be calculated (estimated) by height times base.
\end{proof}

Hence, $$\frac{d \mu_{B_k}}{d \mu_{\mathcal B}}(x) = \alpha_{x,k} \frac{l(\mathcal F^c_x \cap \mathcal B_k)}{{l(\mathcal F^c_x \cap \mathcal B)}},$$ where $\alpha_{x,k} \in [1/\alpha, \alpha]$, for all $x \in \mathbb R^3$ and $k \in \mathbb N$.

Therefore using Lemma \ref{lemma:m_xk} we get on $\mathcal B$
$$m_{x,k} =  \left(\alpha_{x,k} \frac{l(\mathcal F^c_x \cap \mathcal B_k)}{{l(\mathcal F^c_x \cap \mathcal B)}}\right)^{-1} \lambda^k m_x^{\mathcal B}.$$

% verificar se a fracao converge para um e ver se estou usando que estou em T^3 pois estou falando que a center vai na center.
 For each $x$ there is a subsequence $\alpha_{x,k_{i(x)}}$ that converges to some $\tilde \alpha_x$ as $i(x) \rightarrow \infty$. 

\begin{lemma}
 There is $\beta >0$ such that $\lambda^k / l(\mathcal F^c_x \cap \mathcal B_k) \in [1/\beta, \beta]$ for all $x$.
\end{lemma}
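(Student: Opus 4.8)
\emph{Proof proposal.} The plan is to reduce the statement to a uniform geometric comparison, carried out through the semiconjugacy with the linearization, between the center segment $\mathcal F^c_x\cap\mathcal B_k$ and a straight center segment of the linear model whose length is exactly $\lambda^k$ times a constant. Everything takes place on the lift. First I would describe the segment dynamically: since $0$ is a fixed point, $B=\mathcal W^{su}(0)$ is $\tilde f$-invariant, and as $B_k=\tilde f^k(B_0)$ we have $\mathcal B_k=\tilde f^k(\mathcal B_0)$, hence $\mathcal F^c_x\cap\mathcal B_k=\tilde f^k(\mathcal F^c_{\tilde f^{-k}x}\cap\mathcal B_0)$. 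Setting $p=\mathcal F^c_{\tilde f^{-k}x}\cap B$ and $q=\mathcal F^c_{\tilde f^{-k}x}\cap B_0$, which satisfy $d^c(p,q)=\gamma_0$ by the very construction of $B_0$, one obtains $l(\mathcal F^c_x\cap\mathcal B_k)=d^c(\tilde f^k p,\tilde f^k q)$.

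The main ingredients are three uniform facts. First, the center foliation of $\tilde f$ is quasi-isometric (Hammerlindl): there are constants $a\ge 1$, $b\ge 0$ with $\|u-v\|\le d^c(u,v)\le a\|u-v\|+b$ for $u,v$ on a common center leaf. Second, the semiconjugacy $H$ from Sambarino's theorem satisfies $H\circ\tilde f=\tilde A\circ H$ and $\|H-\mathrm{id}\|\le CK$. Third, $H$ maps center leaves of $\tilde f$ into center lines of $\tilde A$, because it carries $\mathcal F^{cs}_{\tilde f}$ and $\mathcal F^{cu}_{\tilde f}$ into the corresponding foliations of $\tilde A$ and $\mathcal F^c=\mathcal F^{cs}\cap\mathcal F^{cu}$. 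By the third fact, $H(p)-H(q)$ is parallel to the one-dimensional center eigenspace of $\tilde A$, so $\|\tilde A^k(H(p)-H(q))\|=\lambda^k\|H(p)-H(q)\|$ \emph{exactly}; this is the crucial point, as it keeps the larger eigenvalue $\lambda_u$ of $\tilde A$ out of the estimate. Since $H(\tilde f^k p)=\tilde A^k H(p)$ and likewise for $q$, the bound $\|H-\mathrm{id}\|\le CK$ forces $\|\tilde f^k p-\tilde f^k q\|$ and $\lambda^k\|H(p)-H(q)\|$ to differ by at most $2CK$; then quasi-isometry sandwiches $l(\mathcal F^c_x\cap\mathcal B_k)=d^c(\tilde f^k p,\tilde f^k q)$ between $\|\tilde f^k p-\tilde f^k q\|$ and $a\|\tilde f^k p-\tilde f^k q\|+b$. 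Hence $l(\mathcal F^c_x\cap\mathcal B_k)$ equals $\lambda^k\|H(p)-H(q)\|$ up to a bounded multiplicative and additive distortion, with all constants independent of $x$ and $k$.

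It remains to bound $\|H(p)-H(q)\|$ away from $0$ and $\infty$, uniformly. From $\|H-\mathrm{id}\|\le CK$ and quasi-isometry one has $\|p-q\|-2CK\le\|H(p)-H(q)\|\le\|p-q\|+2CK$ together with $\frac{1}{a}(\gamma_0-b)\le\|p-q\|\le d^c(p,q)=\gamma_0$. Here I would take $\gamma_0$ large enough — compatible with the requirement that center segments of length at least $\gamma_0$ grow under forward iteration — so that $c_0:=\frac{1}{a}(\gamma_0-b)-2CK$ is not merely positive but at least $4CK$. Then, for every $k\ge 0$, $\lambda^k\ge 1$ (the center eigenvalue $\lambda$ of $\tilde A$ satisfies $\lambda>1$, since $\tilde A$ expands center lines), so the additive errors are absorbed and $l(\mathcal F^c_x\cap\mathcal B_k)$ lies in $[\frac{1}{2}c_0\lambda^k,\ C_3\lambda^k]$ for an explicit constant $C_3$ depending only on $a,b,C,K,\gamma_0$. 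Equivalently $\lambda^k/l(\mathcal F^c_x\cap\mathcal B_k)\in[1/C_3,\ 2/c_0]$, so $\beta:=\max\{C_3,\ 2/c_0\}$ works.

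The main obstacle is uniformity in $k$: the distortions coming from $\|H-\mathrm{id}\|$ and from the additive quasi-isometry constant $b$ are additive, so for small $k$, before $\lambda^k$ has grown, they could overwhelm the multiplicative behaviour; the remedy is to enlarge $\gamma_0$ so that $\|H(p)-H(q)\|$ is comparable to, or larger than, those errors from the outset. The other point that genuinely uses the structure of DA systems is the third fact above — that $H$ sends center leaves of $\tilde f$ into center lines of $\tilde A$ — without which the length would be governed by $\lambda_u^k$ instead of $\lambda^k$ and the lemma would be false; this is a standard consequence of $H$ intertwining the center-stable and center-unstable foliations (Hammerlindl's thesis).
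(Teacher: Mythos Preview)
Your proposal is correct and follows essentially the same strategy as the paper: compare the center segment of $\tilde f$ with the corresponding linear center segment of $\tilde A$ via the semiconjugacy $H$, use that $\|H-\mathrm{id}\|\le CK$ to control the discrepancy by an additive constant, and then observe that this additive error becomes multiplicatively negligible because the segments are long. The paper compresses all of this into a single triangle-inequality estimate for the ratio $\dfrac{\|H\circ \tilde A^n(x)-H\circ \tilde A^n(y)\|}{\|\tilde A^n(x)-\tilde A^n(y)\|}$, noting only that the numerator of each error term is bounded while the denominator $\|\tilde A^n(x)-\tilde A^n(y)\|$ is large.

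Two points are worth flagging as genuine additions on your side. First, you use the semiconjugacy in the direction $H\circ\tilde f=\tilde A\circ H$ (from $f$ to $A$), which is the one guaranteed for a DA diffeomorphism by Sambarino's theorem as quoted; the paper's write-up appears to use the inverse direction, which is not a priori available before one knows $f$ is Anosov. Second, you make explicit two ingredients the paper leaves implicit: the quasi-isometry of $\mathcal F^c$ (to pass between $d^c$ and Euclidean distance) and the fact that $H$ carries center leaves of $\tilde f$ into center lines of $\tilde A$ (so that $\|\tilde A^k(H(p)-H(q))\|=\lambda^k\|H(p)-H(q)\|$ rather than picking up the unstable eigenvalue). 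Both are needed, and both are standard in this setting; your identification of them is accurate. Your remark that one may enlarge $\gamma_0$ to absorb the additive constants is also legitimate, since $\gamma_0$ was only required to be large enough that center segments of that size grow under iteration.
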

\begin{proof}
 We need to estimate the fraction $$ \frac{||f^n(H(x)) - f^n(H(y))||}{|| A^n(x)-A^n(y) ||} = \frac{||H\circ A^n(x) - H \circ A^n(x)||}{|| A^n(x)-A^n(y) ||}.$$

By the triangular inequality:
\begin{eqnarray*}
\frac{||H\circ A^n(x) - H \circ A^n(y)||}{|| A^n(x)-A^n(y) ||} &\leq& \frac{||H(A^n(x))- A^n(x)||}{|| A^n(x)-A^n(y) ||} + \frac{||A^n(x)- A^n(y)||}{|| A^n(x)-A^n(y) ||} \\ &+& \frac{||H(A^n(y))- A^n(y)||}{|| A^n(x)-A^n(y) ||},
\end{eqnarray*}
and
\begin{eqnarray*}
\frac{||H\circ A^n(x) - H \circ A^n(y)||}{|| A^n(x)-A^n(y) ||} &\geq& -\frac{||H(A^n(x))- A^n(x)||}{|| A^n(x)-A^n(y) ||} + \frac{||A^n(x)- A^n(y)||}{|| A^n(x)-A^n(y) ||} \\ &-& \frac{||H(A^n(y))- A^n(y)||}{|| A^n(x)-A^n(y) ||}.
\end{eqnarray*}
We know that $H$ is at a bounded distance of the identity and $|| A^n(x)-A^n(y) ||$ is big.
\end{proof}

By the above lemma we may assume that $\lambda^k / l(\mathcal F^c_x \cap \mathcal B_k) $ goes to one as $k$ increases, otherwise incorporate it to the constant $\alpha_{x,k}$. Then sending $k_{i(x)}$ to infinity
\begin{eqnarray*}\label{eqn:condi.measure}
m_{x} := \lim_{k_{i(x)} \rightarrow \infty}m_{x,k_{i(x)}} =  (l(\mathcal F^c_x \cap \mathcal B) / \tilde \alpha_x) m_x^{\mathcal B}. 
\end{eqnarray*}

By going to a subsequence we obtained a convergent measure, but we want it to have a specific property. Therefore we have to be more careful on how to define them. We have seen above that  $f_*m_{x,k} = \lambda^{-1} m_{f(x),k+1}$, hence for fixed $x$ there is $k_{i(x)}$ defined as above, but if we define $k_{i(f(x))}=k_{i(x)} +1$ we obtain the limit satisfying $f_*m_{x} = \lambda^{-1} m_{f(x)}$. This means that for fixed $x$ we can define on the orbit of $x$ measures satisfying the mentioned dynamical property.

The measures are in fact indexed on a two dimensional plane manifold $W^{su}$. Hence, to define properly on the whole space, consider the rectangle $A$ such that the intersection of $A$ to the stable manifold of the origin is a fundamental domain. And the sides formed by stable and unstable leaves. Hence defining the measures as we mentioned above on $A$ and on its iterates we get measures with dynamical properties.

From the above we conclude that we did get measures on each center leaf with the property that $f_*m_x = \lambda^{-1} m_{f(x)}$. The construction of such measures  will help us to get information of the center Lyapunov exponent, since we may recover $\lambda$ by the equality
$$\frac{df_*m_x}{dm_{f(x)}}= \lambda^{-1}.$$

Let us explore more deeply the above relation.

\begin{lemma}
By the above notation, the center Lyapunov exponent of $f$ exists everywhere and it is equal to $\lambda$.
\end{lemma}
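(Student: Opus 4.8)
The plan is to extract the center Lyapunov exponent directly from the relation $f_*m_x = \lambda^{-1}m_{f(x)}$ established just above, which forces the Radon--Nikodym derivative $df_*m_x/dm_{f(x)}$ to equal the \emph{constant} $\lambda^{-1}$ everywhere. First I would note that $m_x$ is (up to the scaling constant $l(\mathcal F^c_x \cap \mathcal B)/\tilde\alpha_x$) a multiple of the Rokhlin conditional measure $m_x^{\mathcal B}$ of volume; since the center foliation is $C^1$ and the center holonomies inside $\mathcal F^{cu}$ and $\mathcal F^{cs}$ are uniformly bounded, the conditional $m_x^{\mathcal B}$ is (uniformly comparable to) the Lebesgue measure $\ell$ on the center leaf — this is exactly the content of the holonomy-boundedness lemma used to estimate $d\mu_{B_k}/d\mu_{\mathcal B}$. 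So $m_x$ is equivalent to arclength on each center leaf with density bounded above and below.

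Next I would iterate the relation: from $f_*m_x = \lambda^{-1}m_{f(x)}$ one gets $f^n_*m_x = \lambda^{-n}m_{f^n(x)}$, hence for a center segment $J \ni x$,
\begin{equation*}
m_{f^n(x)}\big(f^n(J)\big) = \lambda^n\, m_x(J).
\end{equation*}
On the other hand $m_{f^n(x)}(f^n(J))$ is comparable, with constants independent of $n$ (by the uniform density bounds from the previous paragraph applied at the point $f^n(x)$), to the arclength $\ell(f^n(J))$; likewise $m_x(J)$ is comparable to $\ell(J)$. Therefore
\begin{equation*}
\frac{1}{C}\,\lambda^n \;\le\; \frac{\ell(f^n(J))}{\ell(J)} \;\le\; C\,\lambda^n
\end{equation*}
for a uniform constant $C$. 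Taking logarithms, dividing by $n$, and letting $n \to \infty$ gives that $\tfrac1n \log \ell(f^n(J))/\ell(J) \to \log\lambda$ uniformly, i.e. the growth rate of center arclength under $f^n$ is exactly $\log\lambda$ at every point. Finally I would identify this arclength growth rate with the center Lyapunov exponent: since $\mathcal F^c$ is a $C^1$ foliation, $\|Df^n|_{E^c_x}\|$ and $\ell(f^n(J_x))/\ell(J_x)$ (for shrinking segments, or directly via the $C^1$ parametrization of leaves) have the same exponential rate, so $\lim_n \tfrac1n\log\|Df^n|_{E^c_x}\| = \log\lambda$ exists everywhere and equals the claimed value. (One should keep in mind that in the partially hyperbolic Anosov setting $E^c$ is one-dimensional, so ``$\|Df^n|_{E^c}\|$'' is unambiguous and the exponent is well defined pointwise once the limit exists.)

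The main obstacle I anticipate is making rigorous the passage from ``$m_x$ is comparable to arclength'' to ``arclength growth equals $Df|_{E^c}$ growth,'' uniformly in the base point: one must control the distortion of $f^n$ restricted to a center segment, which is where the $C^1$ regularity of $\mathcal F^c$ (giving a locally uniformly $C^1$ leafwise parametrization) and the uniform boundedness of the center holonomies are essential — they are precisely what prevent the comparison constants from degenerating along orbits. Modulo that uniform-distortion bookkeeping, the argument is the clean chain: dynamical scaling $\Rightarrow$ controlled arclength growth $\Rightarrow$ pointwise center exponent equal to $\lambda$.
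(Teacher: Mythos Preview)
Your proposal is correct and follows essentially the same route as the paper: iterate the scaling relation $f_*^n m_x=\lambda^{-n}m_{f^n(x)}$, use that each $m_x$ has density with respect to center arclength which is uniformly bounded above and below (this is exactly what the paper invokes at the end as ``the densities of $m_x$ are uniformly limited''), and take $\tfrac1n\log$ to kill the bounded factors. The only cosmetic difference is that the paper evaluates the pointwise Radon--Nikodym derivative $\tfrac{df_*^n m_x}{dm_{f^n(x)}}(f^n(x))=\tfrac{\rho_x(x)}{\rho_{f^n(x)}(f^n(x))}\|Df^{-n}(x)\|$ directly via the densities, whereas you pass through the arclength of a segment; your ``main obstacle'' (distortion along a center segment) is therefore a non-issue, since the uniform density bounds already give $\|Df^n|_{E^c_x}\|\in[C^{-1}\lambda^n,C\lambda^n]$ by letting the segment shrink, with $C$ independent of both $n$ and the segment.
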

\begin{proof} Note that
$$\frac{df_*^n m_x}{dm_{f^n(x)}}(f^n(x))= \lambda^{-n}.$$ 
Let us calculate the Radon-Nikodym derivative by another way. Let $I_\delta^n \subset \mathcal F^c_{f^n(x)}$ be a segment of length $\delta$ around $f^n(x)$. Then 
$$ \frac{df_*^n m_x}{dm_{f^n(x)}}(f^n(x))= \lim_{\delta \rightarrow 0} \frac{f_*^n m_x(I^n_\delta)}{m_{f^n(x)}(I^n_\delta)}.$$
And
\begin{eqnarray*}
\frac{df_*^n m_x}{dm_{f^n(x)}}(f^n(x)) 
&=& \lim_{\delta \rightarrow 0} \frac{ m_x(f^{-n}(I^n_\delta))}{m_{f^n(x)}(I_\delta^n) }
=\lim_{\delta \rightarrow 0}\frac{\int_{f^{-n}(I^n_\delta)}  \rho_x d\lambda_x}{\int_{I^n_\delta} \rho_{f^n(x)} d\lambda_{f(x)} } \\
&\thickapprox& \frac{\rho_x(x)}{\rho_{f^n(x)}(f^n(x)) }\lim_{\delta \rightarrow 0}\frac{\int_{f^{-n}(I^n_\delta)}  d\lambda_x}{\int_{I^n_\delta} d\lambda_{f(x)} }
\thickapprox \lim_{\delta \rightarrow 0}\frac{\rho_x(x)}{\rho_{f^n(x)}} \frac{\int_{I^n_\delta} || Df^{-n}||  d\lambda_x}{\int_{I^n_\delta} d\lambda_{f(x)} } \\
&\thickapprox& \frac{\rho_x(x)}{\rho_{f^n(x)}(f^n(x))} || Df^{-n}(x)||.
\end{eqnarray*}

We then have 
$$ \lim_{\delta \rightarrow 0}\frac{df_*^n m_x}{dm_{f^n(x)}}(I^n_\delta)  = \frac{\rho_x(x)}{\rho_{f^n(x)}(f^n(x))} || Df^{-n}(x)||.$$

From the other equalities we have 
$$\frac{\rho_x(x)}{\rho_{f^n(x)}(f^n(x))} || Df^{-n}(x)|| = \lambda^{-n}. $$
By applying "$lim_{n \rightarrow \infty} 1/n \;log$" to the above equality we get
$$\lambda^c(x)=\lambda,$$
since the densities of $m_x$ are uniformly limited.
\end{proof}

We are now ready for the

\textit{Proof of Theorem \ref{theo.rigidity}:}
First, let us prove that $f$ is an Anosov diffeomorphism. We just need to analyze the behavior of $Df$ on the center direction. Let $\varepsilon >0$ be such that $\lambda_\varepsilon := \lambda - \varepsilon > 0$. Since the center exponent exists for every $x$ then, given $x \in \mathbb T^3$, there are $n_x \in \mathbb N$ and a neighborhood $\mathcal U_x$ of $x$ such that $\forall x \in \mathcal U_x$ $|Df^{n_x}|E^c| \geq e^{n_x \lambda_\varepsilon}$. Since $\mathbb T^3$ is a compact manifold take a finite cover $\mathcal U_{x_1} \ldots \mathcal U_{x_l}$. Let $C_i<1$ small enough so that for $x \in \mathcal U_{x_i}$ then $|Df^{n}(x)|E^c| \geq C_{x_i} e^{n \lambda_\varepsilon}$ for all $n \in \{ 0, 1, \ldots, n_{x_i}\}$. Let $C:= \min_i \; C_{x_i}$, we then have that $|Df^{n}(x)|E^c| \geq C e^{n \lambda_\varepsilon}$ for all $x \in \mathbb T^3$ and $n \in \mathbb N$.

 Since, in particular, the center foliation is absolutely continuous, from Gogolev \cite{gogolev-non.abs.cont}, one of the extremal exponents is constant on periodic points. On the other hand the above theorem gives that in particular on the periodic points the central exponent is also constant. Since we are on the conservative case all Lyapunov exponents are constant on periodic points. Then Lemma \ref{lemma.conjugacy} gives that $f$ is $C^1$-conjugate to its linearization.
\hfill $\Box$

\textit{Acknowledgements} This article grew out of my PhD thesis, which was defended at IMPA. I, therefore, would like to thank my former advisor Prof. M. Viana to all the usefull conversations. This work was also greatly influenced from the one month research period I spent at ICMC-USP on September 2011 working with Prof. A. Tahzibi. I leave here all my gratitute to them. This work was partially supported by CNPq and FAPERJ. During the writing of this work the author counted with the support of FAPESP (process \# 2011/21214-3).

\end{document}